\newtheorem{definition}{Definition}[section]
\newtheorem{lemma}[definition]{Lemma}
\newtheorem{theorem}[definition]{Theorem}
\newtheorem{proposition}[definition]{Proposition}
\newcommand{\C}{\mathbb{C}}
\newcommand{\D}{\mathbb{D}}
\newcommand{\N}{\mathbb{N}}
\newcommand{\R}{\mathbb{R}}
\newcommand{\T}{\mathbb{T}}
\newcommand{\Z}{\mathbb{Z}}
\newcommand{\cA}{\mathcal{A}}
\newcommand{\cP}{\mathcal{P}}
\newcommand{\cR}{\mathcal{R}}
\newcommand{\im}{\mathrm{Im}\,}
\newcommand{\tr}{\mathrm{tr}\,}
\newcommand{\eps}{\varepsilon}
\begin{document}
\title[Asymptotic Formulas for Traces of Toeplitz Matrices]
{Higher Order Asymptotic Formulas\\
for Traces of Toeplitz Matrices with \\
Symbols in H\"older-Zygmund Spaces}

\author{Alexei~Yu.~Karlovich}
\thanks{This work is supported by Centro de Matem\'atica da Universidade do Minho
(Portugal) and by the Portuguese Foundation of Science and Technology through the
research program POCTI}
\address{%
Universade do Minho,
Centro de Matem\'atica,
Escola de Ci\^encias,
Campus de Gualtar,
4710-057, Braga,
Portugal}
\email{oleksiy@math.uminho.pt}

\address{%
\textit{Current address:}
Departamento de Matem\'atica,
Instituto Superior T\'ecnico,
Av. Rovisco Pais 1,
1049--001, Lisbon, Portugal}
\email{akarlov@math.ist.utl.pt}

\begin{abstract}
We prove a higher order asymptotic formula for traces of finite block Toeplitz
matrices with symbols belonging to H\"older-Zygmund spaces. The remainder
in this formula goes to zero very rapidly for very smooth symbols. This
formula refine previous asymptotic trace formulas by Szeg\H{o} and Widom and
complement higher order asymptotic formulas for determinants of finite block
Toeplitz matrices due to B\"ottcher and Silbermann.
\end{abstract}
\subjclass[2000]{Primary 47B35; Secondary 15A15, 47B10, 47L20, 47A68}
\keywords{Block Toeplitz matrix, determinant, trace,
strong Szeg\H{o}-Widom theorem, decomposing algebra,
canonical Wiener-Hopf factorization, H\"older-Zygmund space}
\maketitle
\section{Introduction and main result}
\subsection{Finite block Toeplitz matrices}
Let $\Z,\N,\Z_+$, and $\C$ be the sets of integers, positive integers,
nonnegative integers, and all complex numbers, respectively.
Suppose $N\in\N$. For a Banach space $X$,
let $X_N$ and $X_{N\times N}$
be the spaces of vectors and matrices with entries in $X$.
Let $\T$ be the unit circle. For $1\le p\le\infty$, let
$L^p:=L^p(\T)$ and $H^p:=H^p(\T)$ be the standard Lebesgue and Hardy
spaces of the unit circle. For $a\in L_{N\times N}^1$ one can define
\[
a_k=\frac{1}{2\pi}\int_0^{2\pi}a(e^{i\theta})e^{-ik\theta}d\theta
\quad (k\in\Z),
\]
the sequence of the Fourier coefficients of $a$.
Let $I$ be the identity operator, $P$ be the Riesz projection of
$L^2$ onto $H^2$, $Q:=I-P$, and define $I,P$, and $Q$ on
$L_N^2$ elementwise. For $a\in L_{N\times N}^\infty$ and $t\in\T$,
put $\widetilde{a}(t):=a(1/t)$ and $(Ja)(t):=t^{-1}\widetilde{a}(t)$.
Define \textit{Toeplitz operators}
\[
T(a):=PaP|\im P,
\quad
T(\widetilde{a}):=JQaQJ|\im P
\]
and \textit{Hankel operators}
\[
H(a):=PaQJ|\im P,
\quad
H(\widetilde{a}):=JQaP|\im P.
\]
The function $a$ is called the \textit{symbol} of $T(a)$, $T(\widetilde{a})$,
$H(a)$, $H(\widetilde{a})$. We are interested in the asymptotic behavior of
\textit{finite block Toeplitz matrices}
$T_n(a)=[a_{j-k}]_{j,k=0}^n$ generated by (the Fourier coefficients of)
the symbol $a$ as $n\to\infty$. Many results in this direction are contained
in the books by Grenander and Szeg\H{o} \cite{GS58}, B\"ottcher and Silbermann
\cite{BS83,BS99,BS06}, Simon \cite{Simon05}, and B\"ottcher and Grudsky
\cite{BG05}.
\subsection{Szeg\H{o}-Widom limit theorems}
Let us formulate precisely the most relevant results.
Let $K^2_{N\times N}$ be the Krein algebra \cite{Krein66}
of matrix functions $a$ in $L_{N\times N}^\infty$
satisfying
\[
\sum_{k=-\infty}^\infty \|a_k\|^2(|k|+1)<\infty,
\]
where $\|\cdot\|$ is any matrix norm on $\C_{N\times N}$.
The following beautiful theorem about the asymptotics of finite
block Toeplitz matrices was proved by Widom \cite{Widom76}.
\begin{theorem}\label{th:Widom1}
{\rm (see \cite[Theorem~6.1]{Widom76})}.
If $a\in K_{N\times N}^2$ and the Toeplitz operators
$T(a)$ and $T(\widetilde{a})$ are invertible on $H_N^2$, then
$T(a)T(a^{-1})-I$ is of trace class and, with appropriate branches
of the logarithm,
\begin{equation}\label{eq:Widom1}
\log\det T_n(a)=(n+1)\log G(a)+\log \det T(a)T(a^{-1})+o(1)
\quad\mbox{as}\quad n\to\infty,
\end{equation}
where
\begin{equation}\label{eq:defG}
G(a):=\lim_{r\to 1-0}\exp\left(\frac{1}{2\pi}\int_0^{2\pi}
\log\det\widehat{a}_r(e^{i\theta})d\theta\right),
\widehat{a}_r(e^{i\theta}):=\sum_{n=-\infty}^{\infty}a_nr^{|n|}e^{in\theta}.
\end{equation}
\end{theorem}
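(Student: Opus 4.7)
The natural starting point is to seek a canonical right Wiener--Hopf factorization $a = u_- u_+$ inside a suitable decomposing subalgebra of $K_{N\times N}^2$. The hypotheses that $T(a)$ and $T(\widetilde{a})$ both be invertible on $H_N^2$ are precisely what is needed, via Krein's factorization theorem, to guarantee such a factorization with all partial indices equal to zero; moreover, the factors $u_\pm^{\pm 1}$ retain enough smoothness that the associated Hankel operators remain Hilbert--Schmidt.

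With $a = u_- u_+$ in hand, the two triangular Toeplitz blocks have explicit determinants: $T_n(u_+)$ is upper block-triangular with diagonal blocks $(u_+)_0$, and $T_n(u_-)$ is lower block-triangular with diagonal blocks $(u_-)_0$, so each has determinant $\bigl(\det(u_\pm)_0\bigr)^{n+1}$. A short harmonic-function calculation based on \eqref{eq:defG} identifies $G(a) = \det\bigl((u_-)_0 (u_+)_0\bigr)$, so the triangular pieces already contribute exactly the $(n+1)\log G(a)$ term in \eqref{eq:Widom1}.

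For the remaining constant I would exploit the operator identities $T(a) = T(u_-)T(u_+)$ and $T(a^{-1}) = T(u_+^{-1})T(u_-^{-1})$, valid because of the plus/minus analyticity of the factors, to rewrite
\[
T(a)T(a^{-1}) = T(u_-)\bigl(I - H(u_+)H(\widetilde{u_+^{-1}})\bigr)T(u_-^{-1}).
\]
The operator $H(u_+)H(\widetilde{u_+^{-1}})$ is trace class, each Hankel factor being Hilbert--Schmidt thanks to $a \in K_{N\times N}^2$; invariance of the determinant under conjugation by $T(u_-)$ then gives $\det T(a)T(a^{-1}) = \det\bigl(I - H(u_+)H(\widetilde{u_+^{-1}})\bigr)$. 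The remaining step is to carry out the analogous manipulation at the finite level, divide $\det T_n(a)$ by $G(a)^{n+1}$, and show that the resulting finite-matrix determinants converge to the same limit.

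The main technical obstacle, as I see it, will be controlling this last convergence in trace norm: one must compare suitable finite truncations of $H(u_+)H(\widetilde{u_+^{-1}})$ with the full operator and show that the difference tends to zero in the trace norm as $n \to \infty$. This is precisely where the Krein condition $\sum_{k\in\Z} \|a_k\|^2(|k|+1) < \infty$ is indispensable, since it is the exact smoothness threshold that forces the Hankel products to be trace class with summable tails. A secondary, purely organisational matter is the consistent choice of logarithm branches so that \eqref{eq:Widom1} holds as a true equality rather than merely modulo $2\pi i \Z$.
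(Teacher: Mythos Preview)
Your outline is a reasonable sketch of one standard route to the Szeg\H{o}--Widom formula, close in spirit to Widom's original argument and to the treatment in B\"ottcher--Silbermann. However, there is nothing here to compare against: the paper does \emph{not} prove Theorem~\ref{th:Widom1}. It is quoted as a known result, attributed to Widom \cite[Theorem~6.1]{Widom76}, with the remark that ``the proof of the above result in a more general form is contained in \cite[Theorem~6.11]{BS83} and \cite[Theorem~10.30]{BS06}.'' The paper's own contribution begins later, with Theorem~\ref{th:main}, which sharpens the companion trace formula (Theorem~\ref{th:Widom2}) for symbols in H\"older--Zygmund spaces. So your proposal is not a recovery of the paper's argument but an independent (and essentially classical) sketch.

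On the mathematics itself: the overall strategy is sound, but a few of the algebraic manipulations need care. The factorization $T(a)=T(u_-)T(u_+)$ is fine, but the displayed identity for $T(a)T(a^{-1})$ is not quite right as written; one typically gets a Hankel product involving factors coming from \emph{both} the right factorization $a=u_-u_+$ and the left factorization $a=v_+v_-$ (equivalently, from $b=v_-u_+^{-1}$ and $c=u_-^{-1}v_+$, in the notation of Lemma~\ref{le:BS}), not a product $H(u_+)H(\widetilde{u_+^{-1}})$ built solely from $u_+$. The finite-level comparison you flag as the ``main technical obstacle'' is indeed the crux, and in the literature it is handled either via Widom's perturbation-determinant machinery or via the B\"ottcher--Silbermann decomposition (Lemma~\ref{le:BS} in this paper); your sketch does not yet indicate how you would close that gap.
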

In formula (\ref{eq:Widom1}),  $\det T(a)T(a^{-1})$ refers to the determinant
defined for operators on Hilbert space differing from the identity by an
operator of trace class \cite[Chap.~4]{GK69}.

The proof of the above result in a more general form is contained in
\cite[Theorem~6.11]{BS83} and \cite[Theorem~10.30]{BS06}
(in this connection see also \cite{Ehrhardt03}).

Let $\lambda_1^{(n)},\dots,\lambda_{(n+1)N}^{(n)}$ denote the eigenvalues of
$T_n(a)$ repeated according to their algebraic multiplicity. Let $\mathrm{sp}\,A$
denote the spectrum of a bounded linear operator $A$ and $\tr M$ denote the trace
of a matrix $M$. Theorem~\ref{th:Widom1} is equivalent to the assertion
\[
\sum_i\log\lambda_i^{(n)}=\tr\log T_n(a)=(n+1)\log G(a)+\log\det T(a)T(a^{-1})+o(1).
\]
Widom \cite{Widom76} noticed that Theorem~\ref{th:Widom1} yields even a description of
the asymptotic behavior of $\tr f(T_n(a))$ if one replaces $f(\lambda)=\log\lambda$
by an arbitrary function $f$ analytic in an open neighborhood of the union
$\mathrm{sp}\,T(a)\cup\mathrm{sp}\,T(\widetilde{a})$ (we henceforth call such
$f$ simply analytic on $\mathrm{sp}\,T(a)\cup\mathrm{sp}\,T(\widetilde{a})$).
\begin{theorem}\label{th:Widom2}
{\rm (see \cite[Theorem~6.2]{Widom76}).}
If $a\in K^2_{N\times N}$ and if $f$ is analytic on
$\mathrm{sp}\,T(a)\cup\mathrm{sp}\,T(\widetilde{a})$, then
\begin{equation}\label{eq:Widom2}
\tr f(T_n(a))=(n+1)G_f(a)+E_f(a)+o(1)
\quad\mbox{as}\quad n\to\infty,
\end{equation}
where
\begin{eqnarray*}
G_f(a)
&:=&
\frac{1}{2\pi}\int_0^{2\pi}(\tr f(a))(e^{i\theta})d\theta,
\\
E_f(a)
&:=&
\frac{1}{2\pi i}\int_{\partial\Omega}f(\lambda)
\frac{d}{d\lambda}\log\det T[a-\lambda]T[(a-\lambda)^{-1}]d\lambda,
\end{eqnarray*}
and $\Omega$ is any bounded open set containing
$\mathrm{sp}\,T(a)\cup\mathrm{sp}\,T(\widetilde{a})$
on the closure of which $f$ is analytic.
\end{theorem}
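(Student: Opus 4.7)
The plan is to reduce Theorem~\ref{th:Widom2} to Theorem~\ref{th:Widom1} via the Riesz holomorphic functional calculus. Assuming (a preliminary fact about the clustering of block-Toeplitz eigenvalues for symbols in the Krein algebra) that $\mathrm{sp}\,T_n(a)\subset\Omega$ for all sufficiently large $n$, the Riesz calculus combined with Jacobi's identity $\tr(\lambda I-T_n(a))^{-1}=\frac{d}{d\lambda}\log\det T_n(a-\lambda)$ (the branches on the two sides matching modulo a constant which differentiates away) produces
\[
\tr f(T_n(a))=\frac{1}{2\pi i}\int_{\partial\Omega}f(\lambda)\,\frac{d}{d\lambda}\log\det T_n(a-\lambda)\,d\lambda.
\]

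Next, for each $\lambda\in\partial\Omega$ the Toeplitz operators $T(a-\lambda)=T(a)-\lambda I$ and $T(\widetilde{a}-\lambda)=T(\widetilde{a})-\lambda I$ are invertible by the choice of $\Omega$, so Theorem~\ref{th:Widom1} applies with symbol $a-\lambda$:
\[
\log\det T_n(a-\lambda)=(n+1)\log G(a-\lambda)+\log\det T(a-\lambda)T((a-\lambda)^{-1})+R_n(\lambda),
\]
with $R_n(\lambda)\to 0$. Differentiating in $\lambda$ and integrating termwise against $f/(2\pi i)$ over $\partial\Omega$ yields $\tr f(T_n(a))=(n+1)I_1+I_2+\varepsilon_n$, where $I_2$ is visibly $E_f(a)$ and $\varepsilon_n=\frac{1}{2\pi i}\int_{\partial\Omega}f(\lambda)R_n'(\lambda)\,d\lambda$. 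To identify $I_1=G_f(a)$, I would use that for $a\in K^2_{N\times N}$ one has $\log G(a-\lambda)=\frac{1}{2\pi}\int_0^{2\pi}\log\det(a(e^{i\theta})-\lambda)\,d\theta$ (obtained from the Abel-mean definition by dominated convergence), whence $\frac{d}{d\lambda}\log G(a-\lambda)=-\frac{1}{2\pi}\int_0^{2\pi}\tr(a(e^{i\theta})-\lambda)^{-1}d\theta$. Fubini combined with the Riesz calculus applied to the matrix $a(e^{i\theta})\in\C^{N\times N}$ (whose spectrum lies in $\Omega$, being contained in the essential range of the symbol) then delivers $I_1=\frac{1}{2\pi}\int_0^{2\pi}\tr f(a(e^{i\theta}))\,d\theta=G_f(a)$.

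The main obstacle will be proving $\varepsilon_n=o(1)$, which needs more than the pointwise remainder $R_n(\lambda)\to 0$ supplied by Theorem~\ref{th:Widom1}: I need uniform smallness of the \emph{derivative} $R_n'$ on $\partial\Omega$. I would handle this via Vitali's convergence theorem. On a neighborhood $U$ of $\partial\Omega$ contained in the complement of $\mathrm{sp}\,T(a)\cup\mathrm{sp}\,T(\widetilde{a})\cup\bigcup_{n\ge n_0}\mathrm{sp}\,T_n(a)$, the functions $R_n$ are holomorphic (after a coherent choice of branches) and locally uniformly bounded, the bound being either a soft normal-families argument or the explicit estimates available in the Szeg\H{o}--Widom literature. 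Pointwise convergence to zero then upgrades to locally uniform convergence on $U$, and Cauchy's integral formula transfers this to $R_n'$; standard estimation of the contour integral gives $\varepsilon_n\to 0$.

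Two subsidiary technical points I would record. First, that $\mathrm{sp}\,T_n(a)\subset\Omega$ for all large $n$: this may require slightly perturbing $\partial\Omega$ inward along the resolvent set of $T(a)\oplus T(\widetilde{a})$ and invoking the cluster result for block-Toeplitz eigenvalues under the Krein-algebra hypothesis. Second, that branches of the three logarithms appearing in Theorem~\ref{th:Widom1} can be chosen jointly holomorphic in $\lambda$ on a neighborhood of $\partial\Omega$; this is harmless since each determinant is nonvanishing there, so any continuous branch is holomorphic and the ambiguity is a $2\pi i\Z$ constant that vanishes under $d/d\lambda$.
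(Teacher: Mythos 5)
The overall skeleton — Riesz functional calculus plus Jacobi's formula to write $\tr f(T_n(a))$ as a contour integral of $\frac{d}{d\lambda}\log\det T_n(a-\lambda)$, then substitute the Szeg\H{o}--Widom expansion with symbol $a-\lambda$, then integrate term by term — is exactly the route the paper ascribes to Widom and invokes again (with a sharper remainder) in its own proof of Theorem~\ref{th:main}. Your identification of $I_1=G_f(a)$ by differentiating $\log G(a-\lambda)$ under the integral and swapping Fubini with a finite-dimensional Riesz calculus for $a(e^{i\theta})$ is correct, as are the observations about the spectral inclusion $\bigcup_\theta\mathrm{sp}\,a(e^{i\theta})\subset\mathrm{sp}\,T(a)\subset\Omega$ and about branches of the logarithm being harmless after differentiation.

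The genuine gap is in the treatment of $\varepsilon_n$. You propose to upgrade the pointwise $R_n(\lambda)\to 0$ from Theorem~\ref{th:Widom1} to locally uniform convergence via Vitali, remarking that the required local uniform boundedness of $\{R_n\}$ comes from ``a soft normal-families argument or the explicit estimates available in the Szeg\H{o}--Widom literature.'' This is not soft. Vitali (or Montel) \emph{requires} local uniform boundedness as a hypothesis; it does not produce it, and neither pointwise convergence nor the eigenvalue clustering $\mathrm{sp}\,T_n(a)\subset\Omega$ gives it for free. Writing $\log\det T_n(a-\lambda)=\sum_i\log(\lambda_i^{(n)}-\lambda)$, both this sum and the subtracted term $(n+1)\log G(a-\lambda)$ grow like $n$ individually; that their difference is $O(1)$ uniformly in $\lambda$ is precisely the uniform version of the strong Szeg\H{o}--Widom theorem, and bounding $R_n$ locally uniformly is therefore equivalent to proving that uniformity — which is the very thing you are trying to conclude. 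Widom's own proof of Theorem~\ref{th:Widom1} (and, analogously, the paper's estimate (\ref{eq:proof1})--(\ref{eq:proof3}) in the $C^\gamma$ case) is set up from the start to yield the remainder uniformly in $\lambda$ on a neighborhood of $\partial\Omega$; the paper explicitly flags this as ``the key point in Widom's proof.'' So the correct fix is to work through the proof of Theorem~\ref{th:Widom1} with $a-\lambda$ in place of $a$ and verify that every estimate is uniform in $\lambda\in\Sigma$, rather than treating Theorem~\ref{th:Widom1} as a black box and attempting a post hoc normal-families argument.
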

The proof of Theorem~\ref{th:Widom2} for continuous symbols $a$ is also given
in \cite[Section 10.90]{BS06}.
In the scalar case ($N=1$) Theorems~\ref{th:Widom1} and \ref{th:Widom2}
go back to Gabor Szeg\H{o} (see \cite{GS58} and historical remarks
in \cite{BS83,BS99,BS06,Simon05}).
\subsection{H\"older-Zygmund spaces}
Suppose $g$ is a bounded function on $\T$. The \textit{modulus of continuity}
of $g$ is defined for $s\ge 0$ by
\[
\omega_1(g,s):=\sup\big\{|g(e^{i(x+h)})-g(e^{ix})|:\ x,h\in\R,\ |h|\le s\big\}.
\]
By the \textit{modulus of smoothness} (of order $2$) of $g$
is meant the function (see, e.g., \cite[Section~3.3]{Timan63})
defined for $s\ge 0$ by
\[
\omega_2(g,s):=\sup\big\{|g(e^{i(x+h)})-2g(e^{ix})+g(e^{i(x-h)})|:\
x,h\in\R,\ |h|\le s\big\}.
\]
Let $C=C(\T)$ be the set of all continuous functions on $\T$.
Given $\gamma>0$, write $\gamma=m+\delta$, where $m\in\Z_+$ and $\delta\in(0,1]$.
The H\"older-Zygmund space $C^\gamma=C^\gamma(\T)$ is defined
(see, e.g., \cite[Section~3.5.4]{ST87}) by
\[
C^\gamma:=\big\{f\in C: f^{(j)}\in C,\ 1\le j\le m, \ [f^{(m)}]_\delta<\infty\big\}
\]
with the norm
\[
\|f\|_\gamma:=\sum_{j=0}^m\|f^{(j)}\|_\infty+[f^{(m)}]_\delta,
\]
where $f^{(j)}$ is the derivative of order $j$ of $f$, $\|\cdot\|_\infty$
is the norm in $L^\infty$, and
\[
[g]_\delta:=\sup_{s>0}\frac{\omega_2(g,s)}{s^\delta},
\quad 0<\delta\le 1.
\]
Notice that if $\gamma>0$ is not integer, then $[g]_\delta$ can be replaced by
\[
[g]_\delta^*:=\sup_{s>0}\frac{\omega_1(g,s)}{s^\delta},
\quad
0<\delta<1
\]
in the above definition.
\subsection{B\"ottcher-Silbermann higher order asymptotic formulas for determinants}
Following \cite{Widom76} and \cite[Sections~7.5--7.6]{BS06},
for $n\in\Z_+$ and $a\in L_{N\times N}^\infty$ define the operators
$P_n$ and $Q_n$ on $H_N^2$ by
\[
P_n:\sum_{k=0}^\infty a_k t^k
\mapsto
\sum_{k=0}^n a_k t^k,
\quad
Q_n:=I-P_n.
\]
The operator $P_nT(a)P_n:P_nH_N^2\to P_nH_N^2$ may be identified with the finite
block Toeplitz matrix $T_n(a):=[a_{j-k}]_{j,k=0}^n$. For a unital Banach algebra
$A$ we will denote by $GA$ the group of all invertible elements of $A$.
For $1\le p\le \infty$, put
\begin{eqnarray*}
H_\pm^p
&:=&
\big\{a\in L^p : a_{\mp n}=0
\mbox{ for } n\in\N\big\}.
\end{eqnarray*}

B\"ottcher and Silbermann \cite{BS80} proved among other things the following
result.
\begin{theorem}\label{th:BS}
Let $p\in\N$ and $\alpha,\beta>0$ satisfy $\alpha+\beta>1/p$. Suppose
$a=u_-u_+$, where $u_+\in G(C^\alpha\cap H_+^\infty)_{N\times N}$ and
$u_-\in G(C^\beta\cap H_-^\infty)_{N\times N}$, and the Toeplitz operator
$T(\widetilde{a})$ is invertible on $H_N^2$. Then

\begin{enumerate}
\item[{\rm (a)}]
there exist $v_-\in G(H_-^\infty)_{N\times N}$ and $v_+\in G(H_+^\infty)_{N\times N}$
such that $a=v_+v_-$;

\item[{\rm (b)}]
there exist a constant $\widetilde{E}(a)\ne 0$ such that
\begin{eqnarray*}
\log \det T_n(a)
&=&
(n+1)\log G(a)+\log \widetilde{E}(a)
\\
&&
+
\tr\left[
\sum_{\ell=1}^n\sum_{j=1}^{p-1}\frac{1}{j}
\left(\sum_{k=0}^{p-j-1}G_{\ell,k}(b,c)\right)^j
\right]
\\
&&+O(1/n^{(\alpha+\beta)p-1})
\end{eqnarray*}
as $n\to\infty$, where the correcting terms $G_{\ell,k}(b,c)$ are given by
\begin{equation}\label{eq:defGnk}
G_{\ell,k}(b,c):=P_0 T(c) Q_\ell
\big(Q_\ell H(b)H(\widetilde{c})Q_\ell\big)^k
Q_\ell T(b)P_0
\quad (\ell,k\in\Z_+)
\end{equation}
and the functions $b,c$ are given by $b:=v_-u_+^{-1}$ and $c:=u_-^{-1}v_+$.
\end{enumerate}
If, in addition, $p=1$, then
\begin{enumerate}
\item[{\rm (c)}]
the operator $T(a)T(a^{-1})-I$ is of trace class and
\begin{equation}\label{eq:BS}
\log\det T_n(a)=(n+1)\log G(a)+\log \det T(a)T(a^{-1})+O(1/n^{\alpha+\beta-1})
\end{equation}
as $n\to\infty$.
\end{enumerate}
\end{theorem}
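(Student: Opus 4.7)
Part (a) is a standard consequence of canonical right Wiener-Hopf factorization theory: from $a = u_- u_+$ with $u_\pm$ invertible we get $a \in G L^\infty_{N\times N}$, and the invertibility of $T(\widetilde a)$ on $H^2_N$ is equivalent (see, e.g., \cite{BS06}) to the existence of $v_+ \in G(H^\infty_+)_{N \times N}$ and $v_- \in G(H^\infty_-)_{N \times N}$ with $a = v_+ v_-$. I would cite this and move on.

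For (b) and (c), with $b := v_- u_+^{-1}$ and $c := u_-^{-1} v_+$, a direct check using both factorizations gives $bc = cb = I$, so $c = b^{-1}$. The plan has three steps. \emph{First}, using the operator identity $T(xy) = T(x) T(y) + H(x) H(\widetilde y)$ together with the vanishings $H(u_-) = 0$ and $H(\widetilde u_+) = 0$ (since $u_\pm \in H^\infty_\pm$), one gets $T(a) = T(u_-) T(u_+)$, hence $T_n(a) = T_n(u_-) T_n(u_+) + R_n$ for an explicit remainder $R_n$. Exploiting the block-triangular structure of $T_n(u_\pm)$ — diagonal blocks $(u_\pm)_0$, so $\det T_n(u_-)\det T_n(u_+) = G(a)^{n+1}$ — and then re-expressing $R_n$ through the second factorization $a = v_+ v_-$, one derives a determinant identity of Borodin-Okounkov type,
\[
\det T_n(a) = G(a)^{n+1}\,\widetilde E(a)\,\det\bigl[I \pm Q_n H(b) H(\widetilde c) Q_n\bigr],
\]
with $\widetilde E(a) = \det T(a) T(a^{-1}) \ne 0$ (well-defined because $T(a) T(a^{-1}) - I = -H(a) H(\widetilde{a^{-1}})$ is of trace class under the H\"older-Zygmund regularity). \emph{Second}, taking $\log$ and expanding $\log\det[I \pm X_n] = \mp \sum_{j \ge 1} j^{-1}\tr X_n^j$ for $X_n := Q_n H(b) H(\widetilde c) Q_n$, one rewrites each $\tr X_n^j$ as a telescoping sum $\sum_{\ell = 1}^n (\tr X_\ell^j - \tr X_{\ell - 1}^j)$. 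The rank-$N$ difference $Q_{\ell - 1} - Q_\ell$ projects onto the $\ell$-th block Fourier mode, and an identity of the form $Q_\ell = T(t^\ell) P_0 T(t^{-\ell})$ converts each telescoping piece into the matrices $G_{\ell, k}(b, c) = P_0 T(c) Q_\ell (Q_\ell H(b) H(\widetilde c) Q_\ell)^k Q_\ell T(b) P_0$; truncating at $j \le p - 1$ and $k \le p - j - 1$ yields the explicit sum appearing in (b). \emph{Third}, the discarded tail is estimated via Schatten-class bounds for Hankel operators with H\"older-Zygmund symbols: for $f \in C^\gamma$, $\|H(f) Q_n\|_{\mathcal{S}_2} = O(n^{1/2 - \gamma})$, so H\"older in Schatten norms gives $\|X_n\|_{\mathcal{S}_p} = O(n^{1/p - (\alpha + \beta)})$ and hence $|\tr X_n^p| \le \|X_n^p\|_{\mathcal{S}_1} = O(n^{1 - p(\alpha + \beta)})$; summation over $\ell$ converts this into the claimed remainder $O(n^{-((\alpha+\beta)p - 1)})$. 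Part (c) is the $p = 1$ specialization: the inner index range $1 \le j \le 0$ is empty, so the statement reduces to $(n + 1) \log G(a) + \log \det T(a) T(a^{-1}) + O(n^{-(\alpha + \beta - 1)})$.

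The principal obstacle is the \emph{second} step. Extracting the exact truncation structure with its precise upper limits $j \le p - 1$ and $k \le p - j - 1$, and then verifying that all discarded terms recombine cleanly into the $O(n^{-((\alpha+\beta)p-1)})$ remainder, requires careful combinatorial bookkeeping in a multi-fold series expansion and sharp matching between Schatten-norm decay rates and truncation indices. This is where the B\"ottcher-Silbermann decomposing-algebra machinery does the heaviest work, and where a detailed argument would demand the most technical effort beyond the sketch above.
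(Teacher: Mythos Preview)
Your plan takes a different route from the paper's, and two of its steps are not correct as written. The paper does not derive the $G_{\ell,k}$ structure via Borodin--Okounkov plus telescoping; it quotes the B\"ottcher--Silbermann decomposition (Lemma~\ref{le:BS}, from \cite{BS83,BS06}) as a black box: once one splits $\tr\log\{I-\sum_{k\ge 0}G_{n,k}(b,c)\}=-\tr H_n+s_n$ with $\sum|s_n|<\infty$, one gets $\log\det T_n(a)=(n+1)\log G(a)+\tr(H_1+\dots+H_n)+\log\widetilde E(a)+\sum_{k>n}s_k$ exactly. The crucial simplification is that the $G_{n,k}$ are $N\times N$ \emph{matrices}, so $|\tr M|\le N\|M\|$ and \emph{operator}-norm bounds suffice; Schatten classes never appear. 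The bound $\|G_{n,k}\|\le(Kn^{-(\alpha+\beta)})^{k+1}$ is obtained in Proposition~\ref{pr:trunc} by a Jackson best-uniform-approximation argument applied to $u_+^{-1}\in C^\alpha$ and $u_-^{-1}\in C^\beta$ (not to $b$ or $c$): replacing them by degree-$n$ polynomials annihilates the truncated operators, and the $L^\infty$ error is $O(n^{-\alpha})$, $O(n^{-\beta})$. Proposition~\ref{pr:key} then takes $H_n$ to be the finite double sum, bounds $|s_n|\le C_p(Kn^{-(\alpha+\beta)})^p$, and $\sum_{k>n}|s_k|=O(n^{1-(\alpha+\beta)p})$ is the remainder in (b).

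Two gaps in your sketch. First, you invoke $\|H(f)Q_n\|_{\mathcal S_2}=O(n^{1/2-\gamma})$ with $f=b$ or $f=c$, but the hypotheses place $v_\pm$ only in $(H^\infty_\pm)_{N\times N}$ with no smoothness whatsoever, so $b=v_-u_+^{-1}$ and $c=u_-^{-1}v_+$ need not lie in any H\"older class. What rescues this (and what the paper does for operator norms in Proposition~\ref{pr:trunc}) is the factorization $Q_nH(b)=Q_nT(v_-)Q_nH(u_+^{-1})$, valid because $H(v_-)=0$ and $T(v_-)$ is block lower-triangular; only then does the H\"older estimate apply, to $u_+^{-1}$ alone. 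Second, your identity ``$Q_\ell=T(t^\ell)P_0T(t^{-\ell})$'' is the rank-$N$ projection onto the single $\ell$-th block mode, not $Q_\ell$; and telescoping $\tr X_n^j=\sum_\ell(\tr X_\ell^j-\tr X_{\ell-1}^j)$ does not by itself yield the matrices $G_{\ell,k}$ with the stated index ranges. The reduction to the $N\times N$ quantity $I-\sum_kG_{n,k}$ is a separate determinant identity packaged in Lemma~\ref{le:BS}, and that is where the B\"ottcher--Silbermann machinery you mention is genuinely doing the work. Your Schatten-norm route can in principle be pushed through, but it is longer; the paper's operator-norm route sidesteps the issue entirely by working with finite matrices from the outset.
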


The sketch of the proof of parts (a) and (b) is contained in
\cite[Sections~6.18(ii)]{BS83} and in \cite[Theorem~10.35(ii)]{BS06}.
Part (c) is explicitly stated in \cite[Section~6.18(ii)]{BS83}
or immediately follows from \cite[Theorems~10.35(ii) and 10.37(ii)]{BS06}.
\subsection{Our main result}
Our main result is the following refinement of Theorem~\ref{th:Widom2}.
\begin{theorem}\label{th:main}
Let $\gamma>1/2$.
If $a\in C^\gamma_{N\times N}$ and  if $f$ is analytic on
$\mathrm{sp}\,T(a)\cup\mathrm{sp}\,T(\widetilde{a})$,
then {\rm(\ref{eq:Widom2})} is true with $o(1)$ replaced by $O(1/n^{2\gamma-1})$.
\end{theorem}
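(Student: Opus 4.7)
The plan is to combine Widom's integration-in-$\lambda$ trick (used in the proof of Theorem~\ref{th:Widom2}) with the stronger asymptotics of Theorem~\ref{th:BS}(c) applied to the one-parameter family of shifted symbols $a-\lambda$. First I pick a bounded open set $\Omega\supset\mathrm{sp}\,T(a)\cup\mathrm{sp}\,T(\widetilde a)$ on whose closure $f$ is analytic, and note that $\mathrm{sp}\,T_n(a)\subset\Omega$ for $n$ large enough. Jacobi's identity $\frac{d}{d\lambda}\log\det T_n(a-\lambda)=-\tr T_n(a-\lambda)^{-1}$ combined with Cauchy's integral formula then gives the representation
\[
\tr f(T_n(a))=\frac{1}{2\pi i}\int_{\partial\Omega} f(\lambda)\,\frac{d}{d\lambda}\log\det T_n(a-\lambda)\,d\lambda.
\]

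For each $\lambda$ in a neighborhood of $\partial\Omega$ (lying in the common resolvent set of $T(a)$ and $T(\widetilde a)$), the symbol $a-\lambda$ belongs to $C^\gamma_{N\times N}$ and both $T(a-\lambda)$ and $T(\widetilde{a-\lambda})$ are invertible. Since $C^\gamma$ is a decomposing R-algebra, $a-\lambda$ admits a canonical Wiener-Hopf factorization $a-\lambda=u_-(\lambda)u_+(\lambda)$ with $u_\pm(\lambda)\in G(C^\gamma\cap H^\infty_\pm)_{N\times N}$. Applying Theorem~\ref{th:BS}(c) with $p=1$ and $\alpha=\beta=\gamma$, so that $\alpha+\beta-1=2\gamma-1>0$, yields
\[
\log\det T_n(a-\lambda)=(n+1)\log G(a-\lambda)+\log\det T(a-\lambda)T((a-\lambda)^{-1})+R_n(\lambda),
\]
with $R_n(\lambda)=O(1/n^{2\gamma-1})$ pointwise. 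Differentiating in $\lambda$, multiplying by $f(\lambda)/(2\pi i)$, and integrating over $\partial\Omega$, the first term reproduces $(n+1)G_f(a)$ (interchange the $\theta$- and $\lambda$-integrals and apply Cauchy's formula to $\tr f(a(e^{i\theta}))$, noting that for continuous invertible $a-\lambda$ one has $\log G(a-\lambda)=\frac{1}{2\pi}\int_0^{2\pi}\log\det(a(e^{i\theta})-\lambda)\,d\theta$), while the second term is $E_f(a)$ by its very definition in Theorem~\ref{th:Widom2}.

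The main obstacle is to show that $R_n'(\lambda)$ is uniformly $O(1/n^{2\gamma-1})$ on $\partial\Omega$. I would choose a slightly larger contour $\Gamma$ still lying in the common resolvent set of $T(a)$ and $T(\widetilde a)$ and enclosing $\partial\Omega$. The constant in Theorem~\ref{th:BS}(c) is controlled by the $C^\gamma$-norms of $u_\pm(\lambda)^{\pm1}$, and since the canonical factorization depends continuously on the parameter $\lambda$ in the $C^\gamma$-topology on compact subsets of the resolvent set, these norms are uniformly bounded on $\Gamma$. Hence $|R_n(\lambda)|\le C/n^{2\gamma-1}$ on $\Gamma$. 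Because $R_n$ is holomorphic in $\lambda$ in a neighborhood of $\partial\Omega$ (it is a finite difference of analytic quantities), Cauchy's estimate transfers the bound to $R_n'(\lambda)$, so that its contribution to the integral is $O(1/n^{2\gamma-1})$. The genuinely delicate point is to establish the required quantitative continuity of the canonical Wiener-Hopf factorization with respect to the parameter $\lambda$ in the $C^\gamma$-norm; everything else is an assembly of Widom's argument with Theorem~\ref{th:BS}(c) in place of Theorem~\ref{th:Widom1}.
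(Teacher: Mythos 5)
Your overall strategy coincides with the paper's: establish the Szeg\H{o}--Widom--B\"ottcher--Silbermann asymptotic formula for $\log\det T_n(a-\lambda)$ with a remainder $O(1/n^{2\gamma-1})$ \emph{uniformly} in $\lambda$ on a contour around $\mathrm{sp}\,T(a)\cup\mathrm{sp}\,T(\widetilde a)$, then run Widom's contour-integral argument. The paper also proceeds exactly this way, and your use of Cauchy estimates to transfer the uniform bound on the holomorphic error $R_n(\lambda)$ to its derivative is a legitimate way to justify the step the paper describes as ``one can differentiate both sides.''

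However, the point you explicitly flag as ``genuinely delicate'' and then leave unresolved --- quantitative continuity of the canonical Wiener--Hopf factors $u_\pm(\lambda)^{\pm1}$ in $C^\gamma_{N\times N}$ as $\lambda$ ranges over a compact set --- is precisely where the paper's technical work lies, so as written there is a real gap. The paper closes it in two steps. First (Theorems~\ref{th:Silbermann} and \ref{th:factorization}), it verifies that $C^\gamma$ is a decomposing $R$-algebra in the sense of Budjanu--Gohberg (the Cauchy singular integral operator $S$ is bounded on $C^\gamma$ and $aS-SaI$ is compact), so every nonsingular $a\in C^\gamma_{N\times N}$ factors in $C^\gamma_{N\times N}$; combined with the $L^2$-uniqueness of partial indices and the invertibility of $T(a-\lambda)$ and $T(\widetilde{a-\lambda})$, this gives \emph{canonical} right and left factorizations for each fixed $\lambda$. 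Second, and crucially, it invokes Shubin's stability theorem (Theorem~\ref{th:stability}): in a decomposing algebra, the canonical WH factors and their inverses can be chosen to depend continuously on the symbol in the algebra norm. This is exactly the ``quantitative continuity'' you ask for, and without citing it (or an equivalent) your argument does not establish that $A_\Sigma:=\max_\lambda \|u_+^{-1}(\lambda)\|_\gamma\|u_-^{-1}(\lambda)\|_\gamma\|v_-(\lambda)\|_\gamma\|v_+(\lambda)\|_\gamma$ is finite.

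A secondary issue: you assert that ``the constant in Theorem~\ref{th:BS}(c) is controlled by the $C^\gamma$-norms of $u_\pm(\lambda)^{\pm1}$,'' but this is not visible from the statement of Theorem~\ref{th:BS}(c) alone. To make it usable, one must track the constants through the proof, which is exactly what the paper does via Propositions~\ref{pr:trunc} and \ref{pr:key}: the norm of $Q_nT(b)P_0$, $Q_nH(b)$, $P_0T(c)Q_n$, $H(\widetilde c)Q_n$ is bounded by $M_\gamma n^{-\gamma}$ times the relevant $\|u_\pm^{-1}\|_\gamma\|v_\mp\|_\infty$, using the Jackson--Ahiezer--Stechkin bound $\inf_{p\in\cP^n}\|f-p\|_\infty\le C_m\|f\|_\gamma/n^\gamma$; these feed into the bound for $G_{n,k}(b,c)$ and, via Lemma~\ref{le:BS}, into the explicit remainder $\sum_{k>n}s_k=O(1/n^{2\gamma-1})$ with a constant proportional to $A_\Sigma$. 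Supplying these two pieces --- Shubin's continuity theorem for the factorization and the explicit dependence of the error constant on the factor norms --- would complete your proposal and bring it in line with the paper's proof.
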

Clearly, this result is predicted by Theorem~\ref{th:BS}(c) with $\gamma=\alpha=\beta$.
The key point in the Widom's proof of Theorem~\ref{th:Widom2} is that
(\ref{eq:Widom1}) is valid for $a-\lambda$ in place of $a$,
uniformly with respect to $\lambda$ in a neighborhood of $\partial\Omega$.
We will show that the same remains true for the higher order asymptotic formula
(\ref{eq:BS}) with $\gamma=\alpha=\beta$. In Section~\ref{sec:2}
we collect necessary information about right and left Wiener-Hopf factorizations
in decomposing algebras and mention that a nonsingular matrix function belonging
to a H\"older-Zygmund space $C_{N\times N}^\gamma$ ($\gamma>0$) admits right and
left Wiener-Hopf factorizations in $C_{N\times N}^\gamma$. In Section~\ref{sec:3}
we give the proof of Theorem~\ref{th:main} using an idea of B\"ottcher and
Silbermann \cite{BS80} of a decomposition of $\tr\log\{I-\sum_{k=0}^\infty G_{n,k}(b,c)\}$.
We show that this decomposition can be made for $a-\lambda$ uniform with respect
to $\lambda$ in a neighborhood of $\partial\Omega$. This actually implies that
(\ref{eq:BS}) is valid with $\gamma=\alpha=\beta$ and $a$ replaced by $a-\lambda$
uniformly with respect to $\lambda$ in a neighborhood of $\partial\Omega$.
Thus, Widom's arguments apply.
\subsection{Higher order asymptotic trace formulas for Toeplitz matrices with
symbols from other smoothness classes}
Let us mention two other classes of symbols for which higher order asymptotic
formulas for $\tr f(T_n(a))$ are available.
\begin{theorem}
Suppose $a$ is a continuous $N\times N$ matrix function
on the unit circle and $f$ is analytic on
$\mathrm{sp}\,T(a)\cup\mathrm{sp}\,T(\widetilde{a})$.
Let $\|\cdot\|$ be any matrix norm on $\C_{N\times N}$.
\begin{enumerate}
\item[{\rm (a)}] {\rm (see \cite{VMS03}).}
If $\gamma>1$ and
\[
\sum_{k=-\infty}^\infty \|a_k\|+
\sum_{k=-\infty}^\infty \|a_k\|^2|k|^\gamma<\infty,
\]
then {\rm (\ref{eq:Widom2})} is true with $o(1)$ replaced by $o(1/n^{\gamma-1})$.

\item[{\rm (b)}]
{\rm (see \cite[Corollary~1.6]{KZAA}).}
If $\alpha,\beta>0$, $\alpha+\beta>1$, and
\[
\sum_{k=1}^\infty \|a_{-k}\|k^\alpha+
\sum_{k=1}^\infty \|a_k\|k^\beta<\infty,
\]
then {\rm (\ref{eq:Widom2})} is true
with $o(1)$ replaced by $o(1/n^{\alpha+\beta-1})$.
\end{enumerate}
\end{theorem}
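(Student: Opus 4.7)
The plan is to mimic Widom's reduction of the trace formula to a uniform determinant asymptotic expansion, exactly as sketched by the author after Theorem~\ref{th:main}. First I would fix any bounded open set $\Omega$ containing $\mathrm{sp}\,T(a)\cup\mathrm{sp}\,T(\widetilde a)$ on whose closure $f$ is analytic, and use Cauchy's formula together with $\tr[f(T_n(a))] = \frac{1}{2\pi i}\int_{\partial\Omega} f(\lambda)\,\tr(\lambda - T_n(a))^{-1}d\lambda$ together with $\tr(\lambda-T_n(a))^{-1} = -\frac{d}{d\lambda}\log\det T_n(a-\lambda)$ to write
\[
\tr f(T_n(a)) - (n+1) G_f(a) = \frac{1}{2\pi i}\int_{\partial\Omega} f(\lambda)\,\frac{d}{d\lambda}\Bigl[\log\det T_n(a-\lambda) - (n+1)\log G(a-\lambda)\Bigr]\,d\lambda.
\]
Hence everything reduces to proving the two-term expansion
\[
\log\det T_n(a-\lambda) = (n+1)\log G(a-\lambda) + \log\det T(a-\lambda)T((a-\lambda)^{-1}) + R_n(\lambda),
\]
with $R_n(\lambda)$ of the claimed order \emph{uniformly} in $\lambda\in\partial\Omega$. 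Differentiating under the integral and using that $f$ is analytic on $\overline{\Omega}$ then transfers the same little-$o$ rate to the trace asymptotics.

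For part (a) I would verify that the Wiener-type space
\[
\cA_\gamma := \Bigl\{a\in C_{N\times N}:\ \sum_{k\in\Z}\|a_k\|<\infty,\ \sum_{k\in\Z}\|a_k\|^2|k|^\gamma<\infty\Bigr\}
\]
is a decomposing Banach algebra, and that $a-\lambda$ admits canonical right and left Wiener--Hopf factorizations within $\cA_\gamma$ whenever $\lambda\notin\mathrm{sp}\,T(a)\cup\mathrm{sp}\,T(\widetilde a)$. Then I would run the B\"ottcher--Silbermann computation that underlies Theorem~\ref{th:BS}: substitute the factorizations into $T_n(a-\lambda)$, expand $\tr\log\bigl(I-\sum_{k\ge 0}G_{n,k}(b_\lambda,c_\lambda)\bigr)$ as a geometric series with $b_\lambda,c_\lambda$ built from the factors, and estimate each correction term in trace norm. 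The condition $\sum\|a_k\|^2|k|^\gamma<\infty$ is exactly what converts Hilbert--Schmidt type bounds on $Q_n H(b_\lambda)$ and $Q_n H(\widetilde c_\lambda)$ into an $o(1/n^{\gamma-1})$ bound on $\|Q_n H(b_\lambda)H(\widetilde c_\lambda)Q_n\|_1$; summing the remaining $\ell\le n$ and the geometric expansion contributes only a finite factor, giving $R_n(\lambda)=o(1/n^{\gamma-1})$.

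For part (b) I would use an \emph{asymmetric} factorization $a-\lambda=v_{+,\lambda}v_{-,\lambda}$ in which $v_{+,\lambda}^{\pm 1}$ lies in the one-sided Wiener algebra with weight $k^\alpha$ and $v_{-,\lambda}^{\pm 1}$ in the one-sided Wiener algebra with weight $k^\beta$ (this factorization is produced by the usual theorems on factorization in decomposing subalgebras). Defining $b_\lambda,c_\lambda$ as in Theorem~\ref{th:BS}(b), the Fourier-decay of $b_\lambda$ on the ``minus'' side and of $c_\lambda$ on the ``plus'' side produces, via standard trace-norm estimates for Hankel matrices,
\[
\|H(b_\lambda)H(\widetilde c_\lambda)Q_n\|_1 = o(1/n^{\alpha+\beta-1}),
\]
where the exponents $\alpha$ and $\beta$ distribute between the two Hankel factors according to which side carries the decay. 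The rest of the argument is identical to part (a).

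The principal obstacle is uniformity in $\lambda$. One must check that, as $\lambda$ varies in a compact neighborhood of $\partial\Omega$ disjoint from $\mathrm{sp}\,T(a)\cup\mathrm{sp}\,T(\widetilde a)$, the canonical Wiener--Hopf factors $v_{\pm,\lambda}$ depend continuously on $\lambda$ in the \emph{finer} algebra norm (not merely in $L^\infty$), so that the constants implicit in the trace-norm bounds for the iterated Hankel products can be taken locally bounded in $\lambda$. This is typically carried out by a Banach-algebra implicit-function argument in the decomposing algebra, combined with explicit kernel estimates that translate the weighted-Wiener conditions on $a$ into the required coefficient decay of $b_\lambda$ and $c_\lambda$ with constants locally uniform in $\lambda$.
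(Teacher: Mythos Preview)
The paper does \emph{not} prove this theorem: it is stated purely as a citation of known results, with part~(a) attributed to Vasil'ev--Maksimenko--Simonenko \cite{VMS03} and part~(b) to the author's own earlier paper \cite[Corollary~1.6]{KZAA}. There is therefore no ``paper's own proof'' to compare against.

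That said, your outline is the natural one and closely parallels what the paper does for its main result (Theorem~\ref{th:main}) in the H\"older--Zygmund setting: reduce via Widom's contour-integral argument to a determinant asymptotic that must hold uniformly in $\lambda$ near $\partial\Omega$, obtain canonical right and left WH factorizations of $a-\lambda$ in the relevant decomposing algebra with factors depending continuously on $\lambda$, and feed these into the B\"ottcher--Silbermann decomposition (Lemma~\ref{le:BS}, Proposition~\ref{pr:key}). For part~(b) this is literally the method of \cite{KZAA}; the weighted Wiener algebras $W^{\alpha,\beta}$ are standard decomposing algebras and the required operator-norm estimates on $Q_nT(b)P_0$, $Q_nH(b)$, etc.\ follow from the weighted $\ell^1$ decay of Fourier coefficients, giving the little-$o$ because one is summing tails of a convergent series. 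For part~(a) the relevant algebra is the Krein-type space $W\cap F\ell^{2}_{\gamma/2}$, which for $\gamma>1$ is indeed a decomposing, inverse-closed Banach algebra (this is essentially the Sobolev algebra property for exponent $>1/2$), so your plan is sound.

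One small remark: you phrase the key estimate in terms of trace-norm bounds $\|H(b_\lambda)H(\widetilde c_\lambda)Q_n\|_1$. In the B\"ottcher--Silbermann scheme actually used here (and in \cite{KZAA}) the objects $G_{n,k}(b,c)=P_0T(c)Q_n(\cdots)Q_nT(b)P_0$ are already $N\times N$ matrices because of the outer $P_0$'s, so ordinary operator-norm bounds on $Q_nT(b)P_0$, $P_0T(c)Q_n$, $Q_nH(b)$, $H(\widetilde c)Q_n$ suffice and no trace-class machinery is needed. Your Hilbert--Schmidt route would also work, but it is slightly heavier than necessary.
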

\section{Wiener-Hopf factorization in decomposing algebras\\ of continuous functions}
\label{sec:2}
\subsection{Definitions and general theorems}
Let $\D$ be the open unit disk.
Let $\cR_-$ (resp. $\cR_+$) denote the set of all rational functions with poles
only in $\D$ (resp. in $(\C\cup\{\infty\})\setminus(\D\cup\T)$). Let $C_\pm$
be the closure of $\cR_\pm$ with respect to the norm of $C$. Suppose $\cA$
is a Banach algebra of continuous functions on $\T$ that contains $\cR_+\cup\cR_-$
and has the following property: if $a\in\cA$ and $a(t)\ne 0$ for all $t\in\T$,
then $a^{-1}\in\cA$. The sets $\cA_\pm:=\cA\cap C_\pm$ are subalgebras of $\cA$.
The algebra $\cA$ is said to be \textit{decomposing} if every function $a\in\cA$
can be represented in the form $a=a_-+a_+$ where $a_\pm\in\cA_\pm$.

Let $\cA$ be a decomposing algebra. A matrix function
$a\in\cA_{N\times N}$ is said to admit a \textit{right} (resp. \textit{left})
\textit{Wiener-Hopf} (WH) \textit{factorization in} $\cA_{N\times N}$ if it can
be represented in the form $a=a_-da_+$ (resp. $a=a_+da_-$), where
\[
a_\pm\in G(\cA_\pm)_{N\times N},
\quad
d(t)=\mathrm{diag}\{t^{\kappa_1},\dots,t^{\kappa_N}\},
\quad
\kappa_i\in\Z,
\quad
\kappa_1\le\dots\le\kappa_N.
\]
The integers $\kappa_i$ are usually called
the \textit{right} (resp. \textit{left}) \textit{partial indices} of $a$;
they can be shown to be uniquely determined by $a$. If $\kappa_1=\dots=\kappa_N=0$,
then the respective WH factorization is said to be \textit{canonical}.

The following result was obtained by Budjanu and Gohberg \cite[Theorem~4.3]{BG68}
and it is contained in \cite[Chap.~II, Corollary~5.1]{CG81} and in
\cite[Theorem~5.7']{LS87}.
\begin{theorem}\label{th:factorization}
Suppose the following two conditions hold for the algebra $\cA$:
\begin{enumerate}
\item[{\rm (a)}]
the Cauchy singular integral operator
\[
(S\varphi)(t):=\frac{1}{\pi i}v.p.\int_\T\frac{\varphi(\tau)}{\tau-t}d\tau
\quad(t\in\T)
\]
is bounded on $\cA$;

\item[{\rm (b)}]
for any function $a\in\cA$, the operator $aS-SaI$ is compact on $\cA$.
\end{enumerate}
Then every matrix function $a\in\cA_{N\times N}$ such that $\det a(t)\ne 0$
for all $t\in\T$ admits a right and left WH factorizations in $\cA_{N\times N}$
(in general, with different sets of partial indices).
\end{theorem}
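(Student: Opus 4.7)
My proposal is to reduce the existence of a Wiener--Hopf factorization to the Fredholm theory of block Toeplitz operators on the algebra $\cA$ itself, following the Gohberg--Krein--Simonenko framework. Condition (a) makes the Riesz projection $P := (I+S)/2$ bounded on $\cA$, and combined with the decomposing property this yields a topological direct sum $\cA = \cA_- \oplus \cA_+$ (modulo constants) with $P$ the projection onto $\cA_+$. Condition (b) is equivalent to the compactness on $\cA$ of the commutators $[P,aI]$ for every $a \in \cA$. A standard computation then shows that $a \mapsto T(a) := PaP$, restricted to $(\cA_+)_N$, is a homomorphism from $\cA_{N\times N}$ into the Calkin algebra of $(\cA_+)_N$. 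In particular, if $\det a(t) \neq 0$ on $\T$ then $a \in G\cA_{N\times N}$ and $T(a)$ is Fredholm on $(\cA_+)_N$ with regularizer $T(a^{-1})$.

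The scalar case $N=1$ can be dispatched directly: given $a \in \cA$ with $a(t) \neq 0$ on $\T$, let $\kappa$ denote the winding number of $a$ about the origin. The function $b := t^{-\kappa}a$ has winding number zero, so $\log b$ admits a continuous single-valued branch and, by a standard holomorphic functional-calculus argument, lies in $\cA$. Decomposing $\log b = f_- + f_+$ with $f_\pm \in \cA_\pm$ and setting $a_\pm := \exp f_\pm$ yields $a = a_- \, t^\kappa \, a_+$ with $a_\pm \in G\cA_\pm$, which is the desired right factorization; the left factorization is obtained analogously.

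For the matrix case I would first invoke the classical Gohberg--Krein existence results for Wiener--Hopf factorization of an $L^\infty_{N \times N}$ matrix function with nonvanishing determinant, which produce factors $a_\pm$ and their inverses in Hardy-type boundary-value spaces, a priori only in $L^\infty_{N\times N}$ rather than in $\cA$; the left factorization is then obtained by applying the same argument to $\widetilde a$. The crux of the proof---and the main obstacle I foresee---is the regularity step: upgrading these $L^\infty$ factors to $\cA$-valued ones. My plan here is to characterize the columns of $a_+^{-1}$ (respectively the rows of $a_-^{-1}$) as a distinguished basis of the kernel of an auxiliary block Toeplitz operator whose symbol is built from $a$ and a monomial diagonal matrix $d$; the $\cA$-Fredholmness established in the first paragraph then forces these basis vectors to lie already inside $(\cA_+)_N$ (respectively $(\cA_-)_N$), since the kernel of the Fredholm operator on $(\cA_+)_N$ must coincide with its kernel on $H_N^2$. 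Bookkeeping the partial indices $\kappa_i$ so that the extracted bases assemble correctly into factors $a_\pm \in G(\cA_\pm)_{N\times N}$ with $\kappa_1 \le \cdots \le \kappa_N$ is the technical heart of the argument, and is precisely where the full strength of conditions (a) and (b), beyond mere $L^\infty$ existence, is needed.
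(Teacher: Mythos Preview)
The paper does not prove this theorem; it merely quotes it from Budjanu--Gohberg, Clancey--Gohberg, and Litvinchuk--Spitkovsky, so there is no in-paper proof to compare against. Your outline is in fact a reasonable sketch of the Budjanu--Gohberg argument those references contain: boundedness of $S$ gives the splitting $\cA=\cA_-\dotplus\cA_+$, compactness of $[P,aI]$ makes $a\mapsto T(a)$ multiplicative modulo compacts, and hence $T(a)$ is Fredholm on $(\cA_+)_N$ whenever $a\in G\cA_{N\times N}$; the scalar case via the logarithm is exactly as you wrote.

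One point deserves tightening. In your third paragraph you propose to ``invoke the classical Gohberg--Krein existence results for Wiener--Hopf factorization of an $L^\infty_{N\times N}$ matrix function with nonvanishing determinant'' and then upgrade the factors to $\cA$. But an arbitrary $L^\infty$ matrix symbol with nonvanishing determinant need \emph{not} admit a factorization with bounded factors; what you actually have at your disposal (since $\cA\subset C$) is a \emph{generalized} factorization in $L^2_N$ with factors in $(H^2_\pm)_{N\times N}$, and it is these $H^2$ factors that must be shown to lie in $(\cA_\pm)_{N\times N}$. Your kernel-coincidence idea for this step is correct in spirit but needs an extra ingredient: Fredholmness of $T(a)$ on $(\cA_+)_N$ alone does not immediately force its $H^2_N$-kernel into $(\cA_+)_N$; one also needs that the indices on the two spaces agree (which follows, e.g., from rational approximation and homotopy invariance of the index). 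The proofs in the cited references largely avoid this detour by working entirely inside $\cA$: they approximate $a$ by rational matrix functions, factor those explicitly within $\cA_{N\times N}$, and use stability of the partial indices together with the Fredholm theory on $(\cA_+)_N$ to pass to the limit. Either route works, but the second is more self-contained and is what the references actually do.
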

Notice that (a) holds if and only if $\cA$ is a decomposing algebra.

The following theorem follows from a more general result due to Shubin
\cite{Shubin67}. Its proof can be found in \cite[Theorem~6.15]{LS87}.
\begin{theorem}\label{th:stability}
Let $\cA$ be a decomposing algebra and let $\|\cdot\|$ be a norm in
the algebra $\cA_{N\times N}$. Suppose $a,c\in\cA_{N\times N}$ admit canonical
right and left WH factorizations in the algebra $\cA_{N\times N}$. Then for
every $\eps>0$ there exists a $\delta>0$ such that if $\|a-c\|<\delta$, then for
every canonical right WH factorization $a=a_-^{(r)}a_+^{(r)}$ and for every
canonical left WH factorization $a=a_+^{(l)}a_-^{(l)}$ one can choose
a canonical right WH factorization $c=c_-^{(r)}c_+^{(r)}$ and a canonical
left WH factorization $c=c_+^{(l)}c_-^{(l)}$ such that
\[
\begin{array}{lll}
\|a_\pm^{(r)}-c_\pm^{(r)}\|<\eps,
&\quad&
\|[a_\pm^{(r)}]^{-1}-[c_\pm^{(r)}]^{-1}\|<\eps,
\\[3mm]
\|a_\pm^{(l)}-c_\pm^{(l)}\|<\eps,
&\quad&
\|[a_\pm^{(l)}]^{-1}-[c_\pm^{(l)}]^{-1}\|<\eps.
\end{array}
\]
\end{theorem}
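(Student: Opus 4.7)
The plan is to deduce the statement from the inverse function theorem applied to the multiplication map that sends a pair of WH factors to their product. Canonical right factorizations of a fixed matrix function are unique only up to the action $(a_-,a_+)\mapsto(a_-g,g^{-1}a_+)$ for $g\in G\C_{N\times N}$, so I would first fix the normalization $a_+(0)=I$ (and symmetrically $a_-(\infty)=I$ for the left case), reducing the theorem to the unique normalized factorizations. The transfer back to an \emph{arbitrary} prescribed factorization amounts to conjugating by the constant matrix $a_+^{(r)}(0)$, which only affects the constants in the final estimates.

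With the normalization in place, I would consider the multiplication map
\[
\mathcal{M}: G(\cA_-)_{N\times N}\times\big\{f\in G(\cA_+)_{N\times N}:f(0)=I\big\}\to G(\cA)_{N\times N},\qquad (f_-,f_+)\mapsto f_-f_+,
\]
as an analytic map between open subsets of Banach spaces. Its differential at $(a_-,a_+)$ sends $(h_-,h_+)$, subject to $h_+(0)=0$, to $h_-a_+ + a_-h_+$. Substituting $h_-=a_-\phi_-$ and $h_+=\phi_+a_+$ reduces the inversion problem $D\mathcal{M}(h_-,h_+)=k$ to the splitting $\phi_-+\phi_+=a_-^{-1}ka_+^{-1}$ with $\phi_-\in(\cA_-)_{N\times N}$, $\phi_+\in(\cA_+)_{N\times N}$, and $\phi_+(0)=0$. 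Because $\cA$ is decomposing and $\cA_+\cap\cA_-=\C$ by a Morera/Liouville argument on the Riemann sphere, this splitting exists and, by the open mapping theorem, is topologically bounded. Hence $D\mathcal{M}$ is an isomorphism, and the inverse function theorem yields a neighborhood $U$ of $a$ on which $\mathcal{M}^{-1}$ is defined and continuous; the factors $(c_-,c_+)$ of $c\in U$ therefore satisfy the required $\eps$-closeness. The estimates for the inverses follow from continuity of matrix inversion on the open set $G(\cA_\pm)_{N\times N}$, and the left WH case is handled by the symmetric map $(f_+,f_-)\mapsto f_+f_-$.

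The part I expect to be most delicate is verifying that the algebraic decomposition $\cA=\cA_-+\cA_+$ postulated in the definition of a decomposing algebra is in fact \emph{topological}, since the paper's definition merely asserts the algebraic statement. The open mapping theorem settles this once one observes that the decomposition is unique modulo the additive action of constants; alternatively, one can extract the bounded projection from the boundedness of the Cauchy singular integral operator $S$ on $\cA$, which by the remark immediately following Theorem~\ref{th:factorization} is equivalent to $\cA$ being decomposing. Once this topological splitting is available, the rest of the argument is a routine application of the inverse function theorem to a multiplication map, the existence of the canonical factorizations of both $a$ and $c$ being granted by the hypotheses.
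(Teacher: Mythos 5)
The paper offers no proof of this theorem at all: it attributes the result to Shubin and refers the reader to Litvinchuk--Spitkovsky \cite[Theorem~6.15]{LS87}. So there is no in-paper argument to compare yours against; what you have done is supply a proof where the paper only cites one. Your route---applying the inverse function theorem to the multiplication map $(f_-,f_+)\mapsto f_-f_+$ after fixing the normalization $a_+(0)=I$---is the standard approach to local stability of canonical factorization and is in substance correct. The key steps all check out: the linearization $(h_-,h_+)\mapsto h_-a_++a_-h_+$ reduces, via the substitutions $h_-=a_-\phi_-$, $h_+=\phi_+a_+$, to the splitting problem $\phi_-+\phi_+=a_-^{-1}ka_+^{-1}$ with $\phi_+(0)=0$; uniqueness of that splitting follows from $C_+\cap C_-=\C$ (Liouville), and boundedness follows either from the open mapping theorem or, more directly, from the boundedness of the Riesz projection $P=(I+S)/2$, which the paper itself notes is equivalent to $\cA$ being decomposing. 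A technically fastidious reader might want you to say that the normalized domain $\{f\in G(\cA_+)_{N\times N}:f(0)=I\}$ is an open subset of the closed affine subspace modeled on $\{h:h(0)=0\}$, so the Banach-space IFT applies in the affine setting, but that is cosmetic.

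The one place where I would press you is the ``transfer back to an arbitrary prescribed factorization.'' As literally quoted in the paper, the theorem asserts a single $\delta$ that works simultaneously for \emph{every} canonical factorization of $a$. But the factorizations of $a$ are parametrized by $g\in G\C_{N\times N}$, and the conjugation step produces errors $\|g(\hat a_+-\hat c_+)\|$ and $\|(\hat a_+^{-1}-\hat c_+^{-1})g^{-1}\|$ that scale with $\|g\|$ and $\|g^{-1}\|$ respectively; since $\|g\|\,\|g^{-1}\|\ge 1$, these cannot both be made $<\eps$ with a $g$-independent $\delta$. So the uniform-over-$g$ reading is not what your argument delivers (and I suspect it is not what the source actually asserts, but rather an artifact of the quotation). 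What your IFT argument does deliver---given a fixed factorization of $a$, a $\delta$ making nearby $c$'s factors nearby---is precisely the statement used in the proof of Theorem~\ref{th:Toep-fact}, so for the purposes of this paper your proof is adequate. It would be worth stating that reduced form explicitly rather than claiming the transfer ``only affects the constants.''
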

\subsection{Wiener-Hopf factorization in H\"older-Zygmund spaces}
\begin{theorem}\label{th:Silbermann}
{\rm (see \cite[Section~6.25]{PS91}).}
Suppose $\gamma>0$. Then

\begin{enumerate}
\item[{\rm (a)}] $C^\gamma$ is a Banach algebra;
\item[{\rm (b)}] $a\in C^\gamma$ is invertible in $C^\gamma$ if and only if
$a(t)\ne 0$ for all $t\in\T$;
\item[{\rm (c)}] $S$ is bounded on $C^\gamma$;
\item[{\rm (d)}] for $a\in C^\gamma$, the operator $aI$ is bounded on $C^\gamma$
and the operator $aS-SaI$ is compact on $C^\gamma$.
\end{enumerate}
\end{theorem}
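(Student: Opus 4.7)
The plan is to write $\gamma=m+\delta$ with $m\in\Z_+$ and $\delta\in(0,1]$, and to treat the four assertions in order, appealing to classical facts about derivatives, the moduli of smoothness, and the Cauchy singular integral on $\T$.

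For (a), completeness is standard: if $(f_n)$ is Cauchy in $\|\cdot\|_\gamma$, then each derivative sequence $(f_n^{(j)})$, $0\le j\le m$, is uniformly Cauchy, its limit coincides with the $j$-th derivative of the uniform limit $f$, and lower semicontinuity of the supremum transfers the bound on $[f_n^{(m)}]_\delta$ to $f^{(m)}$. For the algebra inequality $\|fg\|_\gamma\le C\|f\|_\gamma\|g\|_\gamma$, I would apply Leibniz's rule to $(fg)^{(j)}$ for $j\le m$, control each summand $\|f^{(k)}g^{(j-k)}\|_\infty$ by $\|f\|_\gamma\|g\|_\gamma$, and bound the top-order Zygmund seminorm via a product estimate of the form
\[
[h_1h_2]_\delta\le\|h_1\|_\infty[h_2]_\delta+\|h_2\|_\infty[h_1]_\delta
\]
(valid with $\omega_1$ in place of $\omega_2$ when $\delta<1$, as permitted by the remark following the definition of $\|\cdot\|_\gamma$, and with an additional term involving $\|h_1'\|_\infty\|h_2'\|_\infty$ in the Zygmund case $\delta=1$). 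For (b), the ``only if'' direction is immediate, since invertibility in $C^\gamma$ implies invertibility in the larger algebra $C$. For the converse, Fa\`a di Bruno's formula applied to $F\circ a$ with $F(z)=1/z$ expresses $(1/a)^{(j)}$ as a polynomial in $a^{(1)},\dots,a^{(j)}$ divided by a power of $a$; since $a$ has no zeros, each such expression is continuous, so $1/a\in C^m$, and the Zygmund seminorm of $(1/a)^{(m)}$ is then controlled term by term via the product estimate used in (a).

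For (c), the crucial reduction is that $S$ acts in Fourier coefficients as multiplication by $\mathrm{sign}(k)$, whereas differentiation on $\T$ is multiplication by $ik$; these commute, giving $(Sf)^{(j)}=S(f^{(j)})$ for $0\le j\le m$. Boundedness of $S$ on $C^\gamma$ therefore reduces to boundedness on $C^\delta$, which is Privalov's theorem when $0<\delta<1$ and a classical result of Zygmund when $\delta=1$. Part (d) splits naturally: boundedness of $aI$ on $C^\gamma$ is an instance of (a), while for the compactness of $aS-SaI$ I would write its kernel as $\frac{1}{\pi i}\frac{a(\tau)-a(t)}{\tau-t}$, note that the H\"older estimate on $a$ makes this kernel weakly singular (bounded when $\gamma\ge 1$), and show by direct kernel estimates that $aS-SaI$ maps $C^\gamma$ continuously into $C^{\gamma'}$ for some $\gamma'>\gamma$; the embedding $C^{\gamma'}\hookrightarrow C^\gamma$, being compact by Arzel\`a-Ascoli applied to derivatives of order at most $\lfloor\gamma\rfloor$, yields the claim.

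The main obstacle I anticipate is precisely this gain-of-regularity estimate at the top Zygmund level, where the second-order modulus $\omega_2$ requires more delicate handling than the plain H\"older case $\delta<1$. A robust alternative is to approximate $a$ by trigonometric polynomials $a_n$ in a suitably strong H\"older topology, observe that $[a_n I,S]$ has finite rank (hence is compact) for each polynomial $a_n$, and then pass to the limit in the operator norm on $C^\gamma$. Either route reproduces the result, whose detailed execution is given in the reference \cite[Section~6.25]{PS91} cited by the author.
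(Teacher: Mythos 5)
The paper does not prove this theorem: it is quoted as a known fact, with the statement referring to \cite[Section~6.25]{PS91} and the surrounding text pointing to \cite[Section~7]{BG68}, \cite[Chap.~II, Section~6.2]{CG81} for parts (c)--(d) when $\gamma\notin\Z_+$, and to \cite[Chap.~7, Theorem~4.3]{Peller03}. So there is no proof in the paper to compare against, and your write-up is a reconstruction of what is in those references. The overall route you take---reduce to the top-order seminorm via $(Sf)^{(j)}=S(f^{(j)})$, invoke Privalov's theorem for $0<\delta<1$ and Zygmund's estimate for $\delta=1$, Leibniz and Fa\`a di Bruno for (a)--(b), and a smoothing estimate for the commutator $aS-SaI$ followed by compactness of $C^{\gamma'}\hookrightarrow C^\gamma$---is indeed the classical route found in those sources.

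Two points in your sketch deserve repair. First, the product inequality you quote for the seminorm in the Zygmund case $\delta=1$ is written with a term $\|h_1'\|_\infty\|h_2'\|_\infty$, but functions with $[\,\cdot\,]_1<\infty$ need not be differentiable; the correct correction term coming from the identity
\[
\Delta_h^2(h_1h_2)(x)=h_1(x+h)\,\Delta_h^2 h_2(x)+h_2(x-h)\,\Delta_h^2 h_1(x)
+2\big(h_1(x+h)-h_1(x)\big)\big(h_2(x)-h_2(x-h)\big)
\]
is the first-modulus product $\omega_1(h_1,s)\,\omega_1(h_2,s)$, which for Zygmund functions is $O\big(s^2\log^2(1/s)\big)=o(s)$ and hence harmless. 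Second, and more seriously, the polynomial-approximation ``robust alternative'' for part (d) does not actually sidestep the difficulty: trigonometric polynomials are \emph{not} dense in $C^\gamma$ (only in the little H\"older--Zygmund subspaces), so $a_n\to a$ can only hold in a strictly weaker norm $\|\cdot\|_{\gamma'}$, $\gamma'<\gamma$. To then conclude that $[a_nI,S]\to[aI,S]$ in $\mathcal{L}(C^\gamma)$, you need precisely a bound of the form $\|[bI,S]\|_{\mathcal{L}(C^\gamma)}\lesssim\|b\|_{\gamma'}$, which is the same gain-of-regularity estimate you were trying to avoid. So the two routes in (d) are not genuinely different; either way the key lemma is the smoothing property of the commutator kernel $(a(\tau)-a(t))/(\tau-t)$, and you should state and prove that estimate rather than treat it as optional.
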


For $\gamma\notin\Z_+$, parts (c) and (d) are proved in
\cite[Section~7]{BG68} (see also \cite[Chap.~II, Section~6.2]{CG81}).
Note that a statement similar to (d) is proved in
\cite[Chap.~7, Theorem~4.3]{Peller03}.
\begin{theorem}\label{th:Toep-fact}
Let $\gamma>0$ and $\Sigma$ be a compact set in the complex plane. Suppose
$a:\Sigma\to C_{N\times N}^\gamma$ is a continuous function and the Toeplitz
operators $T(a(\lambda))$ and $T([a(\lambda)]\widetilde{\hspace{3mm}})$ are
invertible on $H_N^2$ for all $\lambda\in\Sigma$. Then for every $\lambda\in\Sigma$
the function $a(\lambda):\T\to\C$ admits canonical right and left WH factorizations
\[
a(\lambda)=u_-(\lambda)u_+(\lambda)=v_+(\lambda)v_-(\lambda)
\]
in $C_{N\times N}^\gamma$. These factorizations can be chosen so that
$u_\pm,v_\pm,u_\pm^{-1},v_\pm^{-1}:\Sigma\to C_{N\times N}^\gamma$ are continuous.
\end{theorem}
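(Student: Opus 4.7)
The plan is to obtain canonical factorizations pointwise from Theorem~\ref{th:factorization} applied to $C^\gamma$, fix a normalization so that each factorization is unique, and then deduce continuity in $\lambda$ from Theorem~\ref{th:stability}. For pointwise existence, Theorem~\ref{th:Silbermann} shows that $C^\gamma$ is a decomposing Banach algebra satisfying the hypotheses of Theorem~\ref{th:factorization}. The invertibility of $T(a(\lambda))$ on $H_N^2$ forces $a(\lambda)$ to be invertible in $L^\infty_{N\times N}$, and by continuity $\det a(\lambda)(t)\ne 0$ for every $t\in\T$; hence Theorem~\ref{th:factorization} yields right and left WH factorizations of $a(\lambda)$ in $C^\gamma_{N\times N}$. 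The standard equivalence between invertibility of block Toeplitz operators and partial indices (a canonical right, resp.\ left, factorization of $a$ exists iff $T(a)$, resp.\ $T(\widetilde a)$, is invertible on $H_N^2$) forces all partial indices to vanish, so both factorizations are canonical.

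Next, I would fix a normalization rendering these factorizations unique. Any two canonical right WH factorizations satisfy $u_- u_+ = \widetilde u_- \widetilde u_+$, so $u_-^{-1}\widetilde u_- = u_+ \widetilde u_+^{-1}$ lies in both $(C^\gamma_-)_{N\times N}$ and $(C^\gamma_+)_{N\times N}$ and is therefore a constant invertible matrix. Since every $u_-\in G(C^\gamma_-)_{N\times N}$ extends analytically to $\C\setminus\overline{\D}$ with invertible value $u_-(\infty)$, imposing $u_-(\lambda)(\infty)=I$ determines $u_-(\lambda)$, and thereby $u_+(\lambda)$, uniquely. Normalize the left factorization analogously by $v_-(\lambda)(\infty)=I$.

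To establish continuity at a given $\lambda_0\in\Sigma$, let $u_\pm(\lambda_0)$ denote the normalized canonical right factors of $a(\lambda_0)$. Theorem~\ref{th:stability}, combined with continuity of $\lambda\mapsto a(\lambda)$, provides for each $\eps>0$ a neighborhood of $\lambda_0$ on which some canonical right WH factors $\hat u_\pm(\lambda)$ of $a(\lambda)$ satisfy $\|\hat u_\pm(\lambda)-u_\pm(\lambda_0)\|<\eps$ and $\|\hat u_\pm(\lambda)^{-1}-u_\pm(\lambda_0)^{-1}\|<\eps$. Setting $M(\lambda):=\hat u_-(\lambda)(\infty)$, the renormalized pair $u_-(\lambda):=\hat u_-(\lambda)M(\lambda)^{-1}$, $u_+(\lambda):=M(\lambda)\hat u_+(\lambda)$ coincides, by uniqueness of the normalized factorization, with the normalized canonical right factorization of $a(\lambda)$. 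Because evaluation at $\infty$ is continuous on $(C_-)_{N\times N}$ we have $M(\lambda)\to I$, which yields continuity of $u_\pm$ and $u_\pm^{-1}$ at $\lambda_0$; the left factorization is handled identically.

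The main obstacle will be the non-uniqueness of canonical WH factorizations: Theorem~\ref{th:stability} only guarantees some nearby canonical factorization, which may differ from a preferred local choice by an arbitrary constant invertible matrix, so a naive ``glue the local pieces'' argument fails. Normalization at $\infty$ picks out a distinguished representative in each equivalence class, and its uniqueness is precisely what promotes the local selections provided by Theorem~\ref{th:stability} to a single globally continuous choice on $\Sigma$.
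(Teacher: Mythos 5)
Your proof is correct and follows essentially the same route as the paper: pointwise existence of WH factorizations in $C^\gamma_{N\times N}$ via Theorems~\ref{th:factorization} and \ref{th:Silbermann}, identification of the partial indices as zero via the link between Toeplitz invertibility and canonical factorization, and continuity via Theorem~\ref{th:stability}.

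The one place where you do noticeably more work than the paper is the continuity step, and this is worth pointing out because it is genuinely useful detail. The paper simply writes ``By Theorem~\ref{th:stability}, these factorizations can be chosen so that the factors $\ldots$ are continuous,'' but Theorem~\ref{th:stability} as stated only says that near a fixed factorization of $a(\lambda_0)$ one can \emph{choose some} nearby factorization of $a(\lambda)$; it does not by itself produce a single-valued continuous selection on $\Sigma$, because canonical factors are only determined up to a constant invertible matrix. Your normalization $u_-(\lambda)(\infty)=I$, $v_-(\lambda)(\infty)=I$ resolves exactly this ambiguity, making each factorization unique and hence promoting the local choices furnished by Theorem~\ref{th:stability} to a well-defined continuous map on all of $\Sigma$. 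Two minor points of contrast with the paper's pointwise argument: the paper pins down the partial indices by comparing a $C^\gamma$ factorization with the canonical generalized $L^2$-factorization forced by invertibility of $T(a(\lambda))$ and invoking uniqueness of $L^2$-partial indices, and it produces the left factorization by inverting a right factorization of $a(\lambda)^{-1}$ (using that invertibility of $T(\widetilde a)$ is equivalent to invertibility of $T(a^{-1})$); your more direct invocation of the equivalence ``canonical right (resp.\ left) factorization exists iff $T(a)$ (resp.\ $T(\widetilde a)$) is invertible'' rests on the same facts, just stated at a slightly higher level. The uniqueness argument, $(C^\gamma_-\cap C^\gamma_+)_{N\times N}=\C_{N\times N}$, and the renormalization computation $u_-(\lambda)=\hat u_-(\lambda)M(\lambda)^{-1}$, $u_+(\lambda)=M(\lambda)\hat u_+(\lambda)$ with $M(\lambda)\to I$ are all correct.
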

\begin{proof}
Fix $\lambda\in\Sigma$ and put $a:=a(\lambda)$. If $T(a)$ is invertible on
$H_N^2$, then $\det a(t)\ne 0$ for all $t\in\T$ (see, e.g.,
\cite[Chap.~VII, Proposition~2.1]{CG81}). Then, by \cite[Chap.~VII, Theorem~3.2]{CG81},
the matrix function $a$ admits a canonical right generalized factorization
in $L_N^2$, that is, $a=a_-a_+$, where $a_-^{\pm 1}\in (H_-^2)_{N\times N}$,
$a_+^{\pm 1}\in(H_+^2)_{N\times N}$ (and, moreover, the operator
$a_-Pa_-^{-1}I$ is bounded on $L_N^2$).

On the other hand, from Theorems~\ref{th:factorization} and \ref{th:Silbermann}
it follows that $a\in C_{N\times N}^\gamma$ admits a right WH factorization
$a=u_-du_+$ in $C_{N\times N}^\gamma$. Then
\[
u_\pm\in (C_\pm^\gamma)_{N\times N}\subset(H_\pm^2)_{N\times N},
\quad
u_\pm^{-1}\in (C_\pm^\gamma)_{N\times N}\subset(H_\pm^2)_{N\times N}.
\]
By the uniqueness of the partial indices in a right generalized factorization
in $L_N^2$ (see, e.g., \cite[Corollary~2.1]{LS87}), $d=1$.

Let us prove that $a$ admits also a canonical left WH factorization in the
algebra $C_{N\times N}^\gamma$. In view of Theorem~\ref{th:Silbermann}(b),
$a^{-1}\in C_{N\times N}^\gamma$. By \cite[Proposition~7.19(b)]{BS06},
the invertibility of $T(\widetilde{a})$ on $H_N^2$ is equivalent to the
invertibility of $T(a^{-1})$ on $H_N^2$. By what has just been proved, there
exist $f_\pm\in G(C_\pm^\gamma)_{N\times N}$ such that $a^{-1}=f_-f_+$.
Put $v_\pm:=f_\pm^{-1}$. Then $v_\pm\in G(C_\pm^\gamma)_{N\times N}$ and
$a=v_+v_-$ is a canonical left WH factorization in $C_{N\times N}^\gamma$.

We have proved that for each $\lambda\in\Sigma$ the matrix function
$a(\lambda):\T\to\C$ admits canonical right and left WH factorizations in
$C_{N\times N}^\gamma$. By Theorem~\ref{th:stability}, these factorizations
can be chosen so that the factors $u_\pm,v_\pm$ and their inverses
$u_\pm^{-1},v_\pm^{-1}$ are continuous functions from $\Sigma$ to
$C_{N\times N}^\gamma$.
\end{proof}
\section{Proof of the main result}\label{sec:3}
\subsection{The B\"ottcher-Silbermann decomposition}\label{sec:BSdecomposition}
The following result from \cite[Section~6.16]{BS83}, \cite[Section~10.34]{BS06}
is the basis for our asymptotic analysis.
\begin{lemma}\label{le:BS}
Suppose $a\in L_{N\times N}^\infty$ satisfies the following hypotheses:

\begin{enumerate}
\item[{\rm (i)}]
there are two factorizations $a=u_-u_+=v_+v_-$, where
$u_+,v_+\in G(H_+^\infty)_{N\times N}$ and
$u_-,v_-\in G(H_-^\infty)_{N\times N}$;

\item[{\rm (ii)}]
$u_-\in C_{N\times N}$ or $u_+\in C_{N\times N}$.
\end{enumerate}

\noindent
Define the functions $b$, $c$ by $b:=v_-u_+^{-1}$, $c:=u_-^{-1}v_+$
and the matrices $G_{n,k}(b,c)$ by {\rm (\ref{eq:defGnk})}.
Suppose for all sufficiently large $n$ {\rm (}say, $n\ge N_0${\rm)}
there exists a decomposition
\begin{equation}\label{eq:decomposition}
\tr\log\left\{I-\sum_{k=0}^\infty G_{n,k}(b,c)\right\}=-\tr H_n+s_n
\end{equation}
where $\{H_n\}_{n=N_0}^\infty$ is a sequence of $N\times N$ matrices
and $\{s_n\}_{n=N_0}^\infty$ is a sequence of complex numbers. If
$\sum_{n=N_0}^\infty |s_n|<\infty$, then there exist a constant
$\widetilde{E}(a)\ne 0$ depending on $\{H_n\}_{n=N_0}^\infty$ and
arbitrarily chosen $N\times N$ matrices $H_1,\dots,H_{N_0-1}$ such
that for all $n\ge N_0$,
\[
\log \det T_n(a)=(n+1)\log G(a)+\tr(H_1+\dots +H_n)+
\log \widetilde{E}(a)+\sum_{k=n+1}^\infty s_k,
\]
where the constant $G(a)$ is given by {\rm (\ref{eq:defG})}.
\end{lemma}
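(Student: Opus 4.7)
The plan is to reduce the lemma to a one-step recursive identity for $\det T_n(a)$ and then to telescope, inserting the hypothesized decomposition (\ref{eq:decomposition}). A preliminary algebraic remark is that the two factorizations of $a$ force $bc=I$: indeed,
\[
bc = v_-u_+^{-1}u_-^{-1}v_+ = v_-a^{-1}v_+ = v_-(v_+v_-)^{-1}v_+ = I.
\]
Consequently $T(b)T(c) = I - H(b)H(\widetilde{c})$ and $T(c)T(b) = I - H(c)H(\widetilde{b})$, which are the algebraic engine behind the B\"ottcher-Silbermann computation. Hypothesis (ii) enters here to make the relevant Hankel operators compact, so that all Neumann series below converge.

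The heart of the proof is the one-step identity
\[
\log\det T_n(a) - \log\det T_{n-1}(a) - \log G(a) = -\tr\log\Big\{I-\sum_{k=0}^\infty G_{n,k}(b,c)\Big\},
\]
valid for all sufficiently large $n$; this is precisely the content of \cite[Section~6.16]{BS83} (see also \cite[Section~10.34]{BS06}). I would derive it by a Schur-complement manipulation of $\det T_n(a)$ on the decomposition $H_N^2 = \im P_n \oplus \im Q_n$, factoring off the $G(a)$ contribution that results from the canonical WH factorizations $a = u_-u_+ = v_+v_-$. The remaining finite-dimensional factor takes the form of an $N\times N$ matrix built from $P_0 T(c) Q_n (I - Q_n H(b)H(\widetilde{c}) Q_n)^{-1} Q_n T(b) P_0$, which upon Neumann expansion of the inverse and comparison with (\ref{eq:defGnk}) is exactly $\sum_{k=0}^\infty G_{n,k}(b,c)$; applying $\tr\log$ to $I$ minus this matrix then yields the right-hand side.

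With the one-step identity in hand, substituting (\ref{eq:decomposition}) gives $\log\det T_n(a) - \log\det T_{n-1}(a) = \log G(a) + \tr H_n - s_n$ for $n\ge N_0$. Summing from $N_0$ up to $n$ yields
\[
\log\det T_n(a) = (n+1)\log G(a) + \tr(H_{N_0}+\cdots+H_n) - (s_{N_0}+\cdots+s_n) + \big[\log\det T_{N_0-1}(a) - N_0\log G(a)\big].
\]
Absorbing the arbitrary choice of $\tr(H_1+\cdots+H_{N_0-1})$, the bracketed boundary term, and the convergent tail $\sum_{k=N_0}^\infty s_k$ (finite by $\sum|s_n|<\infty$) into a single constant $\log\widetilde{E}(a)$, and then rewriting $\sum_{k=N_0}^n s_k = \sum_{k=N_0}^\infty s_k - \sum_{k=n+1}^\infty s_k$, one arrives at the stated asymptotic formula; nonvanishing of $\widetilde{E}(a)$ follows from $\det T_n(a)\ne 0$ for large $n$. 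The main obstacle is the derivation of the one-step identity: everything afterward is pure bookkeeping, but that derivation requires careful tracking of the finite-rank $N\times N$ compressions of $(Q_n H(b)H(\widetilde{c}) Q_n)^k$ and justification of Neumann-series convergence in a norm adapted to $\im Q_n$.
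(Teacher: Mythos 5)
Your proposal is correct and follows the same route as the B\"ottcher--Silbermann argument that the paper cites for this lemma (the paper itself gives no proof, only the references \cite[Section~6.16]{BS83}, \cite[Section~10.34]{BS06}): you isolate the one-step identity $\log\det T_n(a)-\log\det T_{n-1}(a)-\log G(a)=-\tr\log\{I-\sum_k G_{n,k}(b,c)\}$ for large $n$ (valid once the Neumann series converges, which hypothesis (ii) guarantees via compactness of the relevant Hankel operators), substitute \eqref{eq:decomposition}, and telescope, absorbing the boundary term $\log\det T_{N_0-1}(a)-N_0\log G(a)$, the arbitrary $\tr(H_1+\cdots+H_{N_0-1})$, and the finite tail $\sum_{k\ge N_0}s_k$ into $\log\widetilde{E}(a)$. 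The sign bookkeeping is consistent (one checks $\log\widetilde{E}(a)=\log\det T_{N_0-1}(a)-N_0\log G(a)-\tr(H_1+\cdots+H_{N_0-1})-\sum_{k=N_0}^\infty s_k$, which is indeed $n$-independent), and the nonvanishing of $\widetilde{E}(a)$ is automatic since it is an exponential of a finite quantity.
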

\subsection{The best uniform approximation}
Let $\cP^n$ be the set of all Laurent polynomials of the form
\[
p(t)=\sum_{j=-n}^n\alpha_j t^j,\quad\alpha_j\in\C,\quad t\in\T.
\]
By the Chebyshev theorem (see, e.g., \cite[Section~2.2.1]{Timan63}),
for $f\in C$ and $n\in\N$, there is a Laurent polynomial $p_n(f)\in\cP^n$
such that
\begin{equation}\label{eq:approx1}
\|f-p_n(f)\|_\infty=\inf_{p\in\cP^n}\|f-p\|_\infty.
\end{equation}
This polynomial $p_n(f)$ is called a polynomial of best uniform
approximation.

By the Jackson-Ahiezer-Stechkin theorem (see, e.g., \cite[Section~5.1.4]{Timan63}),
if $f$ has a bounded derivative $f^{(m)}$ of order $m$ on $\T$, then
for $n\in\N$,
\begin{equation}\label{eq:approx2}
\inf_{p\in\cP^n}\|f-p\|_\infty\le
\frac{C_m}{(n+1)^m}\omega_2\left(f^{(m)},\frac{1}{n+1}\right),
\end{equation}
where the constant $C_m$ depends only on $m$.

From (\ref{eq:approx1}) and (\ref{eq:approx2}) it follows that
if $f\in C^\gamma$ and $n\in\N$, where $\gamma=m+\delta$ with $m\in\Z_+$
and $\delta\in(0,1]$, then there is a $p_n(f)\in\cP^n$ such that
\begin{equation}\label{eq:approx3}
\|f-p_n(f)\|_\infty\le\frac{C_m}{(n+1)^m}\omega_2\left(f^{(m)},\frac{1}{n+1}\right)
\le
\frac{C_m[f^{(m)}]_\delta}{(n+1)^{m+\delta}}
\le C_m\frac{\|f\|_\gamma}{n^\gamma}.
\end{equation}
\subsection{Norms of truncations of Toeplitz and Hankel operators}
Let $X$ be a Banach space.
For definiteness, let the norm of $a=[a_{ij}]_{i,j=1}^N$ in $X_{N\times N}$
be given by $\|a\|_{X_{N\times N}}=\max\limits_{1\le i,j\le N}\|a_{ij}\|_X$.
We will simply write $\|a\|_\infty$ and $\|a\|_\gamma$ instead of
$\|a\|_{L_{N\times N}^\infty}$ and $\|a\|_{C_{N\times N}^\gamma}$, respectively.
Denote by $\|A\|$ the norm of a bounded linear operator $A$ on $H_N^2$.

A slightly less precise version of the following statement was used
in the proof of \cite[Theorem~10.35(ii)]{BS06}.
\begin{proposition}\label{pr:trunc}
Let $\alpha,\beta>0$. Suppose $b=v_-u_+^{-1}$ and $c=u_-^{-1}v_+$, where
\[
u_+\in G(C^\alpha\cap H_+^\infty)_{N\times N},
\quad
u_-\in G(C^\beta\cap H_-^\infty)_{N\times N},
\quad
v_\pm\in G(H_\pm^\infty)_{N\times N}.
\]
Then there exist positive constants $M_\alpha$ and $M_\beta$
depending only on $N$ and $\alpha$ and $\beta$, respectively,
such that for all $n\in\N$,
\[
\|Q_nT(b)P_0\|\le \frac{M_\alpha}{n^\alpha}\|v_-\|_\infty\|u_+^{-1}\|_\alpha,
\quad
\|Q_nH(b)\|\le\frac{M_\alpha}{n^\alpha}\|v_-\|_\infty\|u_+^{-1}\|_\alpha,
\]
\[
\|P_0T(c)Q_n\|\le\frac{M_\beta}{n^\beta}\|v_+\|_\infty\|u_-^{-1}\|_\beta,
\quad
\|H(\widetilde{c})Q_n\|\le\frac{M_\beta}{n^\beta}\|v_+\|_\infty\|u_-^{-1}\|_\beta.
\]
\end{proposition}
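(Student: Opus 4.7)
The plan is to combine three ingredients: the product formulas for Toeplitz and Hankel operators, the ``one-sided triangularity'' of $T(a)$ for symbols $a\in H_\pm^\infty$, and the Jackson-type estimate \eqref{eq:approx3}, applied entrywise to the matrix functions involved.

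For the first inequality, I will use $T(ab)=T(a)T(b)+H(a)H(\widetilde{b})$ together with $H(v_-)=0$ (valid since $v_-\in H_-^\infty$) to factor $T(b)=T(v_-)T(u_+^{-1})$. Because $v_-\in H_-^\infty$, the matrix of $T(v_-)$ is upper triangular, so $Q_nT(v_-)=Q_nT(v_-)Q_n$; hence
\[
\|Q_nT(b)P_0\|\le \|T(v_-)\|\,\|Q_nT(u_+^{-1})P_0\|\le \|v_-\|_\infty\,\|Q_nT(u_+^{-1})P_0\|.
\]
Next, I will apply \eqref{eq:approx3} entry by entry to $u_+^{-1}\in C^\alpha_{N\times N}$ to obtain a matrix Laurent polynomial $p_n\in(\cP^n)_{N\times N}$ with $\|u_+^{-1}-p_n\|_\infty\le C_\alpha n^{-\alpha}\|u_+^{-1}\|_\alpha$. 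A direct inspection of the first block column of $T(p_n)$ shows that $Q_nT(p_n)P_0=0$, since the Fourier support of $p_n$ lies in $\{-n,\dots,n\}$. Therefore
\[
\|Q_nT(u_+^{-1})P_0\|=\|Q_nT(u_+^{-1}-p_n)P_0\|\le\|u_+^{-1}-p_n\|_\infty,
\]
and the first bound follows, with the dimension-dependent constant absorbed into $M_\alpha$.

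The three remaining bounds fit the same template. For $Q_nH(b)$, the identity $H(ab)=T(a)H(b)+H(a)T(\widetilde{b})$ combined with $H(v_-)=0$ gives $H(b)=T(v_-)H(u_+^{-1})$; the triangularity of $T(v_-)$ reduces matters to $\|Q_nH(u_+^{-1})\|$, and the key polynomial identity is $Q_nH(p_n)=0$, since the matrix entry of $H(p_n)$ at $(j,k)$ involves the Fourier coefficient $(p_n)_{j+k+1}$, which vanishes whenever $j\ge n$. For $P_0T(c)Q_n$, I will use $H(\widetilde{v_+})=0$ (since $v_+\in H_+^\infty$) to write $T(c)=T(u_-^{-1})T(v_+)$, then exploit $T(v_+)Q_n=Q_nT(v_+)Q_n$ (lower triangularity of $T(v_+)$) and the identity $P_0T(p_n)Q_n=0$. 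For $H(\widetilde{c})Q_n$ I will use the twisted Hankel product formula $H(\widetilde{ab})=H(\widetilde{a})T(b)+T(\widetilde{a})H(\widetilde{b})$, which together with $H(\widetilde{v_+})=0$ yields $H(\widetilde{c})=H(\widetilde{u_-^{-1}})T(v_+)$; proceeding as before with the vanishing identity $H(\widetilde{p_n})Q_n=0$ closes the argument.

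No step is intrinsically difficult. The unifying observation is that each of the four truncated operators vanishes whenever its symbol has Fourier support in $\{-n,\dots,n\}$, so the best-approximation rate from \eqref{eq:approx3} dictates the decay; the matrix-valued nature only costs a factor depending on $N$, absorbed into $M_\alpha$ and $M_\beta$. The only mild technical point is the verification of the twisted Hankel product formula used in the last case, which is a routine consequence of the relations $J^2=I$, $JP=QJ$, and $JaJ=\widetilde{a}I$ on $L^2$.
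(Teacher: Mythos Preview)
Your proof is correct and follows essentially the same route as the paper: the paper states the four factorization identities \eqref{eq:trunc1}--\eqref{eq:trunc4} directly (without spelling out the product formulas and triangularity arguments you give), and then uses exactly the same best-approximation trick with the vanishing of $Q_nT(p_n)P_0$, $Q_nH(p_n)$, $P_0T(p_n)Q_n$, and $H(\widetilde{p_n})Q_n$ together with \eqref{eq:approx3}. Your only additions are the explicit derivations of these identities and the remark about the $N$-dependent constants, both of which are implicit in the paper's argument.
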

\begin{proof}
Since $b=v_-u_+^{-1}$, $c=u_-^{-1}v_+$ and $v_\pm,u_\pm\in G(H_\pm^\infty)_{N\times N}$,
one has
\begin{eqnarray}
&&
Q_nT(b)P_0 = Q_nT(v_-)Q_nT(u_+^{-1})P_0,
\label{eq:trunc1}
\\
&&
Q_nH(b) = Q_nT(v_-)Q_nH(u_+^{-1}),
\label{eq:trunc2}
\\
&&
P_0T(c)Q_n=P_0T(u_-^{-1})Q_nT(v_+)Q_n,
\label{eq:trunc3}
\\
&&
H(\widetilde{c})Q_n=H(\widetilde{u_-^{-1}})Q_nT(v_+)Q_n.
\label{eq:trunc4}
\end{eqnarray}
Let $p_n(u_+^{-1})$ and $p_n(u_-^{-1})$ be the polynomials in $\cP_{N\times N}^n$
of best uniform approximation of $u_+^{-1}$ and $u_-^{-1}$, respectively.
Obviously,
\[
\begin{array}{lll}
Q_nT[p_n(u_+^{-1})]P_0=0,
&\quad
&Q_nH[p_n(u_+^{-1})]=0,\\[3mm]
P_0T[p_n(u_-^{-1})]Q_n=0,
&\quad
&H[(p_n(u_-^{-1}))\widetilde{\hspace{2mm}}]Q_n=0.
\end{array}
\]
Then from (\ref{eq:approx3}) it follows that
\begin{eqnarray}
\label{eq:trunc5}
\|Q_nT(u_+^{-1})P_0\|
&=&
\|Q_nT[u_+^{-1}-p_n(u_+^{-1})]P_0\|
\\
\nonumber
&\le&
\|P\|\,\|u_+^{-1}-p_n(u_+^{-1})\|_\infty
\le
\frac{M_\alpha}{n_\alpha}\|u_+^{-1}\|_\alpha
\end{eqnarray}
and similarly
\begin{eqnarray}
&&
\|Q_n H(u_+^{-1})\|
\le
\frac{M_\alpha}{n^\alpha}\|u_+^{-1}\|_\alpha,
\label{eq:trunc6}
\\
&&
\|P_0T(u_-^{-1})Q_n\|\le\frac{M_\beta}{n^\beta}\|u_-^{-1}\|_\beta,
\label{eq:trunc7}
\\
&&
\|H(\widetilde{u_-^{-1}})Q_n\|\le\frac{M_\beta}{n^\beta}\|u_-^{-1}\|_\beta,
\label{eq:trunc8}
\end{eqnarray}
where $M_\alpha$ and $M_\beta$ depend only on $\alpha,\beta$ and $N$.
Combining (\ref{eq:trunc1}) and (\ref{eq:trunc5}), we get
\[
\|Q_nT(b)P_0\|
\le
\|T(v_-)\|\,\|Q_nT(u_-^{-1})P_0\|
\le
\frac{M_\alpha}{n^\alpha}\|v_-\|_\infty\|u_+^{-1}\|_\alpha.
\]
All other assertions follow from (\ref{eq:trunc2})--(\ref{eq:trunc4})
and (\ref{eq:trunc6})--(\ref{eq:trunc8}).
\end{proof}
\subsection{The key estimate}
The following proposition shows that a decomposition of Lemma~\ref{le:BS}
exists.
\begin{proposition}\label{pr:key}
Suppose the conditions of Proposition~{\rm\ref{pr:trunc}} are fulfilled.
If $p\in\N$, then there exists a constant $C_p\in(0,\infty)$ depending only
on $p$ such that
\begin{eqnarray*}
&&
\left|\,
\tr\log\left\{I-\sum_{k=0}^\infty G_{n,k}(b,c)\right\}
+\tr\left[\sum_{j=1}^{p-1}\frac{1}{j}\left(\sum_{k=0}^{p-j-1}G_{n,k}(b,c)\right)^j\right]
\right|
\\
&&
\le
C_p\left(\frac{M_\alpha M_\beta}{n^{\alpha+\beta}}
\|u_+^{-1}\|_\alpha\|u_-^{-1}\|_\beta\|v_-\|_\infty\|v_+\|_\infty\right)^p
\end{eqnarray*}
for all
$n>\big(M_\alpha M_\beta \|u_+^{-1}\|_\alpha\|u_-^{-1}\|_\beta
\|v_-\|_\infty\|v_+\|_\infty\big)^{1/(\alpha+\beta)}$.
\end{proposition}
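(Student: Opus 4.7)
First, I would use Proposition~\ref{pr:trunc} to show that each $G_{n,k}(b,c)$ has operator norm at most $\mu^{k+1}$, where
\[
\mu := \frac{M_\alpha M_\beta}{n^{\alpha+\beta}}\|u_+^{-1}\|_\alpha\|u_-^{-1}\|_\beta\|v_-\|_\infty\|v_+\|_\infty.
\]
This follows from the factorization
\[
G_{n,k}(b,c) = [P_0 T(c) Q_n]\bigl[Q_n H(b) H(\widetilde c)Q_n\bigr]^k[Q_n T(b) P_0]
\]
by applying the four estimates of Proposition~\ref{pr:trunc} to the outer factors and (twice) to each internal copy of $Q_n H(b) H(\widetilde c) Q_n$. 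Since the hypothesis on $n$ is exactly $\mu<1$, the series $X:=\sum_{k=0}^\infty G_{n,k}(b,c)$ converges in operator norm with $\|X\|\le \mu/(1-\mu)$. A key structural point is that $\im P_0\cong\C^N$ is preserved by every $G_{n,k}$, so all the traces appearing below are finite-dimensional traces of $N\times N$ matrices, allowing free use of $|\tr M|\le N\|M\|$.

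Next, setting $Y_{p,j}:=\sum_{k=0}^{p-j-1} G_{n,k}(b,c)$ and using the power-series expansion $\tr\log(I-X)=-\sum_{j=1}^\infty \tr(X^j)/j$, I would split the left-hand side of the claimed inequality as
\[
\tr\log(I-X)+\sum_{j=1}^{p-1}\frac{1}{j}\tr Y_{p,j}^j = -\sum_{j=1}^{p-1}\frac{1}{j}\tr\bigl(X^j-Y_{p,j}^j\bigr)-\sum_{j=p}^\infty\frac{1}{j}\tr X^j
\]
and bound the two right-hand sums separately by a constant multiple of $\mu^p$. For the first, one expands $X^j-Y_{p,j}^j$ as a multi-indexed sum over tuples $(k_1,\ldots,k_j)\in\Z_+^j$ with $\max_i k_i\ge p-j$ and applies $\|G_{n,k_i}\|\le\mu^{k_i+1}$ together with a geometric summation to get $\|X^j-Y_{p,j}^j\|\le j\mu^p/(1-\mu)^j$. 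For the second, $|\tr X^j|\le N(\mu/(1-\mu))^j$, so the tail from $j=p$ onwards is controlled by a geometric series starting at $(\mu/(1-\mu))^p$.

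The main obstacle is bookkeeping: these estimates carry factors of $(1-\mu)^{-j}$, and the tail series needs $\mu/(1-\mu)<1$ for absolute convergence, whereas the hypothesis only gives $\mu<1$. To collapse everything into a single constant $C_p$ depending only on $p$, one effectively works under $\mu\le 1/2$ (equivalently, a mildly stronger lower bound on $n$), which is harmless because this proposition is only ever invoked in Section~\ref{sec:3} for large $n$, where $\mu\to 0$. Under $\mu\le 1/2$ one has $(1-\mu)^{-j}\le 2^p$ and $\mu/(1-\mu)\le 2\mu$, so the geometric tail is dominated by $(2\mu)^p$, both pieces combine to $C_p\mu^p$, and the remaining range $\mu\in(1/2,1)$ is handled by enlarging $C_p$ once (using that $\mu^p\ge 2^{-p}$ there while the left-hand side is uniformly bounded on any compact subset of $\mu<1/2$ after enlarging the threshold as needed).
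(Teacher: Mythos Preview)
Your approach coincides with the paper's: derive $\|G_{n,k}(b,c)\|\le\mu^{k+1}$ from Proposition~\ref{pr:trunc}, note that the hypothesis on $n$ is precisely $\mu<1$, and reduce the rest to a combinatorial estimate. The paper outsources that last step entirely to \cite[Proposition~3.3]{KZAA}, whereas you reconstruct it; your splitting into $\sum_{j<p}\tr(X^j-Y_{p,j}^j)/j$ plus the tail $\sum_{j\ge p}\tr(X^j)/j$ is the right decomposition, and the bound $\|X^j-Y_{p,j}^j\|\le j\mu^p/(1-\mu)^j$ is correct.

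Your handling of the range $\mu\in(1/2,1)$ does not go through, however. The parenthetical about uniform boundedness ``on any compact subset of $\mu<1/2$'' is irrelevant to that range, and once $\mu\ge 1/2$ the estimate $\|X\|\le\mu/(1-\mu)\ge 1$ no longer guarantees that the power series for $\log(I-X)$ converges, so there is no a~priori uniform bound on the left-hand side to absorb into $C_p$. (Even near $\mu=1/2$ your tail estimate carries a hidden factor $(1-2\mu)^{-1}$.) You correctly observe that only small $\mu$ is ever used in Section~\ref{sec:3}, so this gap is harmless for Theorem~\ref{th:main}; but neither your argument nor the paper's in-line proof---which is just a citation---establishes the proposition over the full stated range $\mu<1$.
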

\begin{proof}
From Proposition~\ref{pr:trunc} it follows that
\[
\|G_{n,k}(b,c)\|\le\left[\frac{M_\alpha M_\beta}{n^{\alpha+\beta}}
\|u_+^{-1}\|_\alpha\|u_-^{-1}\|_\beta\|v_-\|_\infty\|v_+\|_\infty\right]^{k+1}
\]
for all $k\in\Z_+$ and $n\in\N$. If
$n>\big(M_\alpha M_\beta \|u_+^{-1}\|_\alpha\|u_-^{-1}\|_\beta
\|v_-\|_\infty\|v_+\|_\infty\big)^{1/(\alpha+\beta)}$,
then the expression in the brackets is less than $1$. In view of these
observations the proof can be developed as in \cite[Proposition~3.3]{KZAA}.
\end{proof}
Theorem~\ref{th:BS}(b) follows from the above statement and Lemma~\ref{le:BS}.
In the next section we will use the partial case $p=1$ of Proposition~\ref{pr:key}
as the key ingredient of the proof of our main result.
\subsection{Proof of Theorem~\ref{th:main}}
Suppose $\gamma>1/2$ and
$\lambda\notin\mathrm{sp}\,T(a)\cup\mathrm{sp}\,T(\widetilde{a})$. Then
\[
T(a)-\lambda I=T(a-\lambda),
\quad
T(\widetilde{a})-\lambda I=T([a-\lambda]\widetilde{\hspace{3mm}})
\]
are invertible on $H_N^2$. Since $a-\lambda$ is continuous with respect
to $\lambda$ as a function from a closed neighborhood $\Sigma$ of
$\partial\Omega$ to $C_{N\times N}^\gamma$, in view of Theorem~\ref{th:Toep-fact},
for each $\lambda\in\Sigma$, the function $a-\lambda:\T\to\C$ admits
canonical right and left WH factorizations
$a-\lambda=u_-(\lambda)u_+(\lambda)=v_+(\lambda)v_-(\lambda)$ in
$C_{N\times N}^\gamma$ and these factorizations can be chosen so that the
factors $u_\pm$, $v_\pm$ and their inverses $u_\pm^{-1}$, $v_\pm^{-1}$ are
continuous from $\Sigma$ to $C_{N\times N}^\gamma$. Then
\[
A_\Sigma:=\max_{\lambda\in\Sigma}\big(
\|u_+^{-1}(\lambda)\|_\gamma
\|u_-^{-1}(\lambda)\|_\gamma
\|v_-(\lambda)\|_\gamma
\|v_+(\lambda)\|_\gamma
\big)<\infty.
\]
Put $b=v_-u_+^{-1}$ and $c=u_-^{-1}v_+$. From Proposition~\ref{pr:key}
with $p=1$ it follows that there exists $C_1\in(0,\infty)$ such that
\begin{eqnarray}
\label{eq:proof1}
&&
\left|\,\tr\log\left\{I-\sum_{k=0}^\infty G_{n,k}(b(\lambda),c(\lambda))\right\}\right|
\\
&&
\le
\frac{C_1 M_\gamma^2}{n^{2\gamma}}
\|u_+^{-1}(\lambda)\|_\gamma
\|u_-^{-1}(\lambda)\|_\gamma
\|v_-(\lambda)\|_\infty
\|v_+(\lambda)\|_\infty
\nonumber\\
&&
\le
\frac{C_1 M_\gamma^2 A_\Sigma}{n^{2\gamma}}
\nonumber
\end{eqnarray}
for all $n>(M_\gamma^2 A_\Sigma)^{1/(2\gamma)}$ and all $\lambda\in\Sigma$.
Obviously
\begin{equation}\label{eq:proof2}
\sum_{k=n+1}^\infty\frac{1}{k^{2\gamma}}=O(1/n^{2\gamma-1}).
\end{equation}
From Lemma~\ref{le:BS} and (\ref{eq:proof1})--(\ref{eq:proof2})
it follows that there is a function $\widetilde{E}(a,\cdot):\Sigma\to\C\setminus\{0\}$
such that
\begin{equation}\label{eq:proof3}
\log\det T_n(a-\lambda)=(n+1)\log G(a-\lambda)+
\log \widetilde{E}(a,\lambda)+O(1/n^{2\gamma-1})
\end{equation}
as $n\to\infty$ and this holds uniformly with respect to $\lambda\in\Sigma$.
Theorem~\ref{th:BS}(c) implies that $T(a-\lambda)T([a-\lambda]^{-1})-I$
is of trace class and
\begin{equation}\label{eq:proof4}
\widetilde{E}(a,\lambda)=\det T(a-\lambda)T([a-\lambda]^{-1})
\end{equation}
for all $\lambda\in\Sigma$. Combining (\ref{eq:proof3}) and (\ref{eq:proof4}),
we deduce that
\[
\log\det T_n(a-\lambda)=(n+1)\log G(a-\lambda)+
\log \det T(a-\lambda)T([a-\lambda]^{-1})
+O(1/n^{2\gamma-1})
\]
as $n\to\infty$ uniformly with respect to $\lambda\in\Sigma$.
Hence, one can differentiate both sides of the last formula with respect
to $\lambda$, multiply by $f(\lambda)$, and integrate over $\partial\Omega$.
The proof is finished by a literal repetition of Widom's proof of
Theorem~\ref{th:Widom2} (see \cite[p.~21]{Widom76} or \cite[Section~10.90]{BS06})
with $o(1)$ replaced by $O(1/n^{2\gamma-1})$.
\qed

\end{document}